\numberwithin{equation}{section}
\newtheorem{theorem}{Theorem}[section]
\newtheorem{proposition}[theorem]{Proposition}
\newtheorem{corollary}[theorem]{Corollary}
\newtheorem{lemma}[theorem]{Lemma}
\newtheorem{Proposition-Definition}[theorem]{Proposition-Definition}
\theoremstyle{definition}
\newtheorem{definition}[theorem]{Definition}
\newtheorem{problem}[theorem]{Problem}
\newtheorem{notation}[theorem]{Notation}
\newtheorem{remark}[theorem]{Remark}
\newcommand{\IP}{{\rm IP}}
\newcommand{\RP}{{\rm RP}}
\newcommand{\reg}{{\rm reg}}
\newcommand{\sing}{{\rm sing}}
\title[Periodic points for self-maps of Fujiki varieties]{Periodic points for meromorphic self-maps of Fujiki varieties}
\author{Tien-Cuong Dinh}
\address{Department of Mathematics, National University 
of Singapore, 10 Lower Kent Ridge Road, Singapore 119076.}
\email{matdtc@nus.edu.sg}
\author{Guolei Zhong}
\address{Center for Complex Geometry,
	Institute for Basic Science (IBS),
	55 Expo-ro, Yuseong-gu, Daejeon, 34126, Republic of Korea}
\email{guolei@ibs.re.kr, zhongguolei@u.nus.edu}
\subjclass[2020]{
32H50, 
32U40, 
32J27, 
37F10
}
\keywords{Dominant topological degree, equidistribution, Fujiki variety, meromorphic self-map, periodic point}
\dedicatory{Dedicated to Professor De-Qi Zhang on the occasion of his 60th birthday}
\date{}
\begin{document}

\begin{abstract}
Let \(f\colon X\to X\) be a dominant meromorphic self-map of a compact complex variety \(X\) in the Fujiki class \(\mathcal{C}\).
If the topological degree of \(f\) is strictly larger than the other dynamical degrees of \(f\), we show that the number of isolated \(f\)-periodic points grows exponentially fast similarly to the topological degrees of the iterates of \(f\); in particular, we give a positive answer to a conjecture of Shou-Wu Zhang. 
In the general case, we show that the exponential growth of the number of isolated \(f\)-periodic points is at most the algebraic entropy of \(f\).
\end{abstract}

\maketitle


\section{Introduction}\label{s:intro}
As one of the classical problems in complex dynamics,
the equidistribution of orbits of points, analytic subsets, or periodic points is expected to impose strong ergodic properties on the dynamical system. 
Meanwhile, within a given dynamical system, the asymptotic upper bound of the number of isolated periodic points plays a significant role in their
equidistribution property. 
When the underlying space is a compact K\"ahler manifold, a lot of progress has been made in this direction over the past twenty years; 
we refer the reader to \cite{BD99,DNT-ind,DNT-BLMS,DNV-adv,DDG10,DS-survey,Duj06,Fav98, IU10,Kal00, KH07,Sai87,Xie15} and the references therein for the  related known results.

On the other hand, considering that an analytic periodic subset is not necessarily smooth, it becomes non-negligible to account for the presence of singularities when studying these sub-dynamical systems.
Motivated by a conjecture of Shou-Wu Zhang, in this paper, we study the growth of periodic points of a meromorphic map of a compact complex variety in the \textit{Fujiki class} \(\mathcal{C}\) (or \textit{Fujiki variety} for short).
A Fujiki variety is bimeromorphic to a compact K\"ahler manifold (cf.~Definition \ref{d:fujiki}).   
This is a continuation of the joint works of the first author \cite{DNT-ind} and \cite{DNT-BLMS}.

Let  \(f\colon X\to X\) be a meromorphic map of a compact complex variety of dimension \(k\) in the Fujiki class \(\mathcal{C}\).
We assume that \(f\) is dominant, i.e., the image of \(f\) contains a dense Zariski open subset of \(X\).
For a review of dynamical degrees, we refer to Definition \ref{d:dynamical-degree}. 
As the dynamical degrees for meromorphic maps on compact K\"ahler manifolds are bimeromorphic invariants (see \cite{DS-ens,Dinh-Sibony-Annals05}), given an integer \(0\leq p\leq k\), we can define the \(p\)-th dynamical degree \(d_p(f)\) of \(f\) by the \(p\)-th dynamical degree of the dominant meromorphic map on its smooth K\"ahler model induced by \(f\). 
We say that $f$ has a \textit{dominant topological degree} if its last dynamical degree \(d_t\coloneqq d_k(f)\) (also called the \textit{topological degree}) is strictly larger than the other dynamical degrees. 

Our first main theorem below provides an exact estimate of the asymptotic growth of periodic points if the meromorphic map has a dominant topological degree.

\begin{theorem} \label{t:main1}
Let $f\colon X\to X$ be a dominant meromorphic map of a compact complex variety in the Fujiki class \(\mathcal{C}\), and $Y$ a proper analytic subset of $X$ containing the indeterminacy set of $f$. 
Denote by $\IP_{n}^0(X\setminus Y,f)$ the set of all isolated \(f\)-periodic points of period $n$ whose orbits do not intersect $Y$. 
Assume that \(f\) has a dominant topological degree \(d_t\). 
Then we have the cardinality $\# \IP_n^0(X\setminus Y,f)=d_t^n(1+o(1))$ as $n$ tends to infinity.
\end{theorem}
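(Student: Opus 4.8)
The plan is to transport the problem to a compact K\"ahler manifold, prove a sharp upper bound by intersection theory on the product, and prove the matching lower bound by a Briend--Duval type construction built on the equilibrium measure; the general (non-sharp) upper bound --- the ``general case'' of the abstract --- is used as an input, together with an induction on $\dim X$. For the \emph{reduction to a K\"ahler model}: since $X$ lies in the Fujiki class, fix by Hironaka a bimeromorphic morphism $\pi\colon\widehat X\to X$ with $\widehat X$ a compact K\"ahler manifold, and let $\widehat f:=\pi^{-1}\circ f\circ\pi$ be the induced dominant meromorphic self-map. Dynamical degrees being bimeromorphic invariants (\cite{DS-ens,Dinh-Sibony-Annals05}), $d_p(\widehat f)=d_p(f)$ for all $p$, so $\widehat f$ again has dominant topological degree $d_t$. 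Enlarging $Y$ to a proper analytic subset $Y'\supseteq Y$ and setting $\widehat Y:=\pi^{-1}(Y')$, one may arrange that $\pi$ restricts to a biholomorphism $\widehat X\setminus\widehat Y\to X\setminus Y'$ conjugating $\widehat f$ to $f$; then isolated $\widehat f$-periodic points of period $n$ with orbit avoiding $\widehat Y$ are in bijection with isolated $f$-periodic points of period $n$ with orbit avoiding $Y'$. As $\IP_n^0(X\setminus Y',f)\subseteq\IP_n^0(X\setminus Y,f)$, it suffices to prove $\#\IP_n^0(X\setminus Y,f)\le d_t^n(1+o(1))$ for the given $Y$ and $\#\IP_n^0(X\setminus Y',f)\ge d_t^n(1-o(1))$ for the enlarged one; everything below takes place on $\widehat X$.

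\emph{Upper bound.} Let $\Gamma_n\subseteq\widehat X\times\widehat X$ be the closure of the graph of $\widehat f^{\,n}$ and $\Delta$ the diagonal. Each isolated $\widehat f^{\,n}$-fixed point outside $\widehat Y$ is an isolated point of $\Gamma_n\cap\Delta$ and contributes at least $1$ to a suitably defined nonnegative intersection whose total is the cohomological number $\{\Gamma_n\}\cdot\{\Delta\}$. In the K\"unneth decomposition of $\{\Delta\}$, the component in $H^{2k}(\widehat X)\otimes H^0(\widehat X)$ has coefficient $1$ and pairs with $\{\Gamma_n\}$ to give exactly the topological degree $d_t^n$; the component in $H^0\otimes H^{2k}$ contributes $1$ (because $\Gamma_n$ is birational to $\widehat X$ over the first factor); and each remaining component pairs to a quantity of order $d_p(\widehat f)^n=o(d_t^n)$ since $d_t$ strictly dominates. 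Hence $\{\Gamma_n\}\cdot\{\Delta\}=d_t^n(1+o(1))$ and therefore $\#\IP_n^0(X\setminus Y,f)\le d_t^n(1+o(1))$. The real work here is to make sense of the intersection $\Gamma_n\cap\Delta$ across the indeterminacy locus of $\widehat f^{\,n}$ and over $\widehat Y$, and to control the positive-dimensional components, which is the super-potential and intersection-of-currents analysis of \cite{DNT-ind,DNT-BLMS}.

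\emph{Lower bound.} Since $d_t>d_{k-1}$, the normalized pull-backs $d_t^{-n}(\widehat f^{\,n})^*\delta_a$ converge, for $a$ off a pluripolar set, to the equilibrium measure $\mu$; it is $\widehat f$-invariant, mixing, of maximal entropy $\log d_t>0$, charges no pluripolar (hence no proper analytic) set, and all of its Lyapunov exponents are $\ge\tfrac12\log(d_t/d_{k-1})>0$. Fix a generic center $x\in\supp\mu$ outside $\widehat Y$. By the volume estimates of \cite{BD99} applied with these positive Lyapunov exponents, all but $o(d_t^n)$ of the $d_t^n$ inverse branches of $\widehat f^{\,n}$ over $x$ extend to a ball $B$ of a fixed radius around $x$ and are uniformly contracting there; meanwhile $d_t^{-n}(\widehat f^{\,n})^*\delta_x\to\mu$ forces a proportion $\ge\mu(B')(1+o(1))$ of the preimages of $x$ to lie in any sub-ball $B'\Subset B$. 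A contracting inverse branch carrying $x$ into a small enough $B'$ maps $B$ into itself and hence has a unique fixed point, which is repelling and so an isolated $\widehat f^{\,n}$-fixed point in $B$. Running this over a $\mu$-almost partition of $\widehat X$ into such balls (with centers in a fixed Pesin block of measure $>1-\varepsilon$) manufactures $\ge d_t^n(1-o(1))$ isolated $\widehat f^{\,n}$-fixed points; discarding those of period properly dividing $n$ costs $\le\sum_{m\mid n,\,m<n}d_t^m(1+o(1))=o(d_t^n)$, and discarding those whose orbit meets $\widehat Y$ also costs $o(d_t^n)$, as such points lie on the subvariety $\bigcup_{0\le j<n}\widehat f^{-j}(\widehat Y)$ of dimension $<k$ and controlled volume, to which the general upper bound applies with the gain $\max_{p<k}d_p(\widehat f)<d_t$ (monotonicity of dynamical degrees under restriction, plus induction on $\dim X$). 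Thus $\#\IP_n^0(X\setminus Y',f)\ge d_t^n(1-o(1))$, which together with the upper bound proves the theorem; specializing to a polarized endomorphism of a projective variety yields Shou-Wu Zhang's conjecture.

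\emph{Main obstacle.} The crux is the \emph{sharpness} of the two bounds --- that the constant is exactly $1$, not merely that the exponential rate equals $\log d_t$. On the upper side this demands precise control of the local intersection multiplicities of $\Gamma_n\cap\Delta$, of its excess components, and of its behaviour over the indeterminacy set $Y$ and the exceptional locus of $\pi$, i.e.\ the full strength of the current-intersection machinery. On the lower side it is the upgrade from ``a positive proportion'' to ``all but $o(d_t^n)$'' of the inverse branches, which requires uniform-in-$n$ Pesin estimates for the sizes of their domains along $\mu$-generic orbits; ensuring that the periodic orbits produced avoid $\widehat Y$ then cannot be done head-on and must be routed through the general upper bound in lower dimension.
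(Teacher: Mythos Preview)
Your architecture---pass to a K\"ahler model $\widehat X$, invoke the density/intersection results of \cite{DNT-ind,DNT-BLMS} for the upper bound, and run a Briend--Duval construction against the equilibrium measure for the lower bound---is exactly the paper's. But there is one lemma you are using without proving, and it is needed on \emph{both} sides.

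\textbf{The gap.} For the upper bound you promise $\#\IP_n^0(X\setminus Y,f)\le d_t^n(1+o(1))$ for the \emph{original} $Y$, yet your intersection estimate on $\widehat X$ only controls isolated $\widehat f$-periodic points, hence only $\#\IP_n^0(X\setminus Y',f)$. Periodic orbits of $f$ that avoid $Y$ but meet $Y'\setminus Y$ (the exceptional locus of $\pi$) need not lift to isolated $\widehat f$-periodic points at all, so the passage from ``$\{\Gamma_n\}\cdot\{\Delta\}=d_t^n(1+o(1))$'' to ``$\#\IP_n^0(X\setminus Y,f)\le d_t^n(1+o(1))$'' is unjustified. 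For the lower bound you propose to discard periodic points whose orbit meets $\widehat Y$ by observing that they lie in $\bigcup_{0\le j<n}\widehat f^{-j}(\widehat Y)$; but this set depends on $n$ (and its number of components grows like $d_t^n$), so ``the general upper bound'' does not apply to it.

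\textbf{What is missing.} Both gaps are plugged by the single statement: for any proper analytic $Z\subset X$, the isolated periodic points of period $n$ landing in $Z$ number $o(d_t^n)$. Your phrase ``monotonicity of dynamical degrees under restriction, plus induction on $\dim X$'' is pointed in the right direction but skips the crucial step: $Z$ is not $f$-invariant, so there is nothing to restrict to. The paper's argument (Proposition~\ref{p:small-per}) is: assume some proper $Z$ carries $\ge\delta\,\#P_{n_i}$ along a subsequence; pass to a minimal-dimensional such $Z$; use that $f$ is a bijection on $P_{n_i}$ and a pigeonhole/inclusion--exclusion count to force $f^s(Z)=f^t(Z)$ for some $s<t$, so $Z$ is $f$-periodic; replace $f$ by an iterate to make $Z$ invariant; and only then invoke $d_p(f|_Z)\le d_p(f)\le d_{k-1}(f)<d_t$ (Proposition~\ref{p:small-dyn}) together with Theorem~\ref{t:main2} to get a contradiction. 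This reduction to an invariant $Z$ is the substantive missing idea in your proposal.

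\textbf{A further difference.} For the lower bound the paper does \emph{not} construct and then discard. It enlarges the obstruction set $\Sigma_0$ in the inverse-branch construction of \cite{DNT-ind} to contain $Y$, so that every branch produced already satisfies $B_{-i}\cap Y=\varnothing$ for all $i$; the resulting repelling periodic points automatically have orbits in $X\setminus Y$. This avoids routing the lower bound through the upper-bound machinery and keeps the two halves independent.
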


\begin{remark}[{Multiplicities; cf.~Notation \ref{n:periodic}}]\label{r:mul}
When \(X\) is a compact K\"ahler manifold, Theorem \ref{t:main1} has been proved in \cite{DNT-ind}. 
In Theorem \ref{t:main1}, we refrain from counting multiplicities of the periodic points for the following reasons. 
First, the multiplicity of a singular point is not well-defined.  
Second, to get a compact K\"ahler model, apart from the resolution of singularities, one also needs to blow up certain smooth loci of \(X\); although the multiplicity of a periodic point 
within these smooth blown-up loci is well-defined, it is unclear whether their multiplicities could be controlled by their (positive-dimensional) inverse images on its  K\"ahler model \(\widetilde X\). 
However, if we count with multiplicities of the periodic points contained in the (open) smooth locus over which \(\widetilde X\to X\) is isomorphic, then Theorem \ref{t:main1} remains true; see Theorem \ref{t:equi-mero}.
Note that any modification of an analytic variety may change the number of periodic points. 
\end{remark}

To apply Theorem \ref{t:main1} for \(f\) holomorphic, 
we can take $Y$ to be empty, which enables us to give a positive answer to a conjecture of Shou-Wu Zhang \cite[Conjecture 1.2.2]{Zhang-distribution}. 

\begin{corollary}\label{c:swz}
Let \(f\colon X\to X\) be a polarized holomorphic self-map of a compact K\"ahler space, i.e., \(f^*[\omega]=q[\omega]\) for some K\"ahler class \([\omega]\) and some integer \(q>1\).
Then we have the cardinality \(\#\{x\in X~|~f^n(x)=x\}=q^{n\cdot\dim(X)}(1+o(1))\) as \(n\) tends to infinity. 
\end{corollary}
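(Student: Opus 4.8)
The plan is to obtain Corollary \ref{c:swz} as a direct application of Theorem \ref{t:main1} with $Y$ taken to be empty, once it is checked that a polarized holomorphic self-map satisfies all the hypotheses of that theorem and that all of its periodic points are isolated.

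First I would dispose of the structural hypotheses. A compact Kähler space lies in the Fujiki class $\mathcal{C}$ — it is bimeromorphic to a compact Kähler manifold via a resolution of singularities — so Theorem \ref{t:main1} applies to $X$. Since $f$ is holomorphic, its indeterminacy set is empty; this is precisely what lets us take $Y=\emptyset$, and then $\IP_n^0(X\setminus Y,f)$ is simply the set of isolated $f$-periodic points fixed by $f^n$. The map $f$ is dominant: no fibre of $f$ can contain a curve $C$, because restricting $f^*[\omega]=q[\omega]$ to $C$ would give $q\,[\omega]|_C=(f|_C)^*[\omega]=0$, whereas $\int_C\omega|_C>0$ forces $[\omega]|_C\neq0$; hence $f$ is quasi-finite, and being proper it is finite, so surjective.

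Next I would check that $f$ has a dominant topological degree by computing its dynamical degrees. As $f$ is holomorphic, $(f^n)^*=(f^*)^n$ acts multiplicatively on cohomology, so $f^*[\omega]=q[\omega]$ gives $(f^n)^*[\omega]^p=q^{np}[\omega]^p$. On a Kähler model $\pi\colon\widetilde X\to X$, the induced meromorphic map $\widetilde f=\pi^{-1}\circ f\circ\pi$ satisfies $\pi\circ\widetilde f^{\,n}=f^n\circ\pi$, whence $(\widetilde f^{\,n})^*\bigl(\pi^*[\omega]^p\bigr)=q^{np}\,\pi^*[\omega]^p$; since $\pi^*[\omega]$ is nef and big we have $\pi^*[\omega]^p\neq0$, so the defining properties of the dynamical degrees (cf.~Definition \ref{d:dynamical-degree}) give the classical value $d_p(f)=q^p$ for $0\le p\le k$, where $k\coloneqq\dim X$. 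In particular $d_t=d_k(f)=q^{\dim X}$ is strictly larger than $d_p(f)=q^p$ for every $p<k$, so $f$ has a dominant topological degree $d_t=q^{\dim X}$.

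It remains to observe that every $f$-periodic point is isolated, and then to invoke Theorem \ref{t:main1}. If some irreducible component $Z$ of the fixed-point set of $f^n$ had $\dim Z\ge1$, then $f^n|_Z=\mathrm{id}_Z$, so restricting $(f^n)^*[\omega]=q^n[\omega]$ to $Z$ would give $(q^n-1)\,[\omega]|_Z=0$ — impossible, since $q^n>1$ while $\int_Z(\omega|_Z)^{\dim Z}>0$ forces $[\omega]|_Z\neq0$. Hence $\{x\in X:f^n(x)=x\}$ is finite and equals the set of isolated $f$-periodic points fixed by $f^n$, i.e.\ $\IP_n^0(X,f)$ (or a finite union over $m\mid n$ of such sets, the $m=n$ term being dominant). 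Theorem \ref{t:main1} now yields $\#\{x\in X:f^n(x)=x\}=d_t^n(1+o(1))=q^{n\dim X}(1+o(1))$, as claimed. Beyond Theorem \ref{t:main1} itself, the only point requiring real care is the passage to the Kähler model — ensuring that the strict inequality $d_t>d_p$ and the dynamical-degree computation hold for the nef-and-big class $\pi^*[\omega]$ — whereas isolatedness and the remaining hypotheses each follow from the polarization in a single line.
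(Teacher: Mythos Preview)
Your overall strategy matches the paper's exactly: verify that $X$ is Fujiki, that $f$ is dominant with empty indeterminacy set, that all periodic points are isolated, and that $f$ has dominant topological degree, then apply Theorem~\ref{t:main1} with $Y=\varnothing$.

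There is one genuine gap in your computation of the dynamical degrees. Exhibiting $\pi^*[\omega]^p$ as an eigenvector of $(\widetilde f^{\,n})^*$ with eigenvalue $q^{np}$ only gives the \emph{lower} bound $d_p(\widetilde f)\ge q^p$, since an eigenvalue bounds the spectral radius from below but says nothing about the other directions in $H^{p,p}(\widetilde X)$. From $d_p\ge q^p$ alone you cannot conclude $d_k>d_{k-1}$. The paper fills this in differently: it first computes the topological degree $d_k=\deg(f)=q^k$ by integrating over $X_{\reg}$ (using that $\pi^*[\omega]$ is nef and big so $\int_{X_{\reg}}\omega^k>0$, and the projection formula for the finite map $f$), then computes $d_1=q$ via the nef--big class $\alpha=\pi^*\omega$, and finally invokes log-concavity: $d_p\le d_1^{\,p}=q^p$, while $d_k=q^k$ forces equality throughout. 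Your closing remark that ``the only point requiring real care is the passage to the K\"ahler model'' is exactly right, but the care required is precisely this log-concavity argument (or, alternatively, a proof that a nef--big class can replace a K\"ahler class in the definition of every $d_p$), which your proposal does not supply.
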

We refer to Definition \ref{d:cpt} and Remark \ref{r:2cpt}  for the review of compact K\"ahler spaces. 
Corollary \ref{c:swz} (cf.~Theorem \ref{t:equi-mero}) also implies that the periodic points of a polarized self-map of a compact K\"ahler space are countable and Zariski dense (cf.~\cite[Theorem 5.1]{Fak03} for the projective case). 

\begin{remark}[Holomorphic self-map with a dominant topological degree]\label{r:proj:dominant}
It's worth noting that when \(f\colon X\to X\) is a surjective holomorphic map of a projective variety \(X\), the condition of \(f\) having a dominant topological degree is equivalent to \(f\) being \textit{int-amplified}, i.e., \(f^*H-H\) is ample for some integral ample divisor \(H\) (see \cite[Lemma 3.6]{Men20} and \cite[Proposition 3.7]{MZ23}). 
We refer the reader to Proposition \ref{p:dominant-int} and Remark \ref{rmk:singular:dominant-int} for the case of compact K\"ahler spaces.
For more exploration on special holomorphic self-maps of compact K\"ahler spaces or projective varieties, we refer the reader to \cite{MZ18, Men20, MZ23, Zhong-Asian} and the references therein. 
\end{remark}

When the topological degree is not dominant, the problem of the growth of periodic points is still widely open.  
It has been proved in \cite{DNT-BLMS} that a dominant meromorphic self-map on a compact K\"ahler manifold is an \textit{Artin-Mazur} map (see~\cite{AM65}), i.e., the number of isolated periodic points of period \(n\) grows at most exponentially fast with \(n\); see Theorem \ref{t:per-kahler}. 
Our second main theorem extends \cite[Theorem 1.1]{DNT-BLMS} to the case of Fujiki varieties.
As indicated in Remark \ref{r:mul}, we do not count with multiplicities of the periodic points herein, either.

\begin{theorem}\label{t:main2}
Let \(f\colon X\to X\) be a dominant meromorphic map of a compact complex variety in the Fujiki class \(\mathcal{C}\) and \(I_f\) the indeterminacy set of \(f\). 
Denote by \(\IP_n^0(X\setminus I_f,f)\) the set of all isolated \(f\)-periodic points of period \(n\) whose orbits do not intersect \(I_f\).
Then we have
\[
\limsup_{n\to\infty}\frac{1}{n}\textup{log}\,\#\IP_n^0(X\setminus I_f,f)\leq h_a(f).
\]
Here, \(h_a(f)\) is the algebraic entropy of \(f\) (see Definition \ref{d:dynamical-degree}).  
In particular, the number of isolated \(f\)-periodic points of period \(n\) grows at most exponentially fast with respect to \(n\).
\end{theorem}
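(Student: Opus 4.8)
The plan is to reduce the statement to the known K\"ahler case (Theorem \ref{t:per-kahler}, i.e.\ \cite[Theorem 1.1]{DNT-BLMS}) via a bimeromorphic model, carefully tracking how isolated periodic points whose orbits avoid $I_f$ lift to the model. Since $X$ is Fujiki, fix a modification $\pi\colon \widetilde X\to X$ with $\widetilde X$ a compact K\"ahler manifold, and let $\widetilde f\colon \widetilde X\dashrightarrow \widetilde X$ be the meromorphic map induced by $f$, so that $\pi\circ\widetilde f = f\circ\pi$ as meromorphic maps. Because dynamical degrees are bimeromorphic invariants, $h_a(f) = h_a(\widetilde f) = \max_{0\le p\le k}\log d_p(\widetilde f)$, so it suffices to produce, for each $n$, an injection (up to a bounded or subexponential error) from $\IP_n^0(X\setminus I_f, f)$ into a set of isolated periodic points of $\widetilde f^{\,n}$ of the type counted in Theorem \ref{t:per-kahler}, applied to $\widetilde f$ relative to a suitable proper analytic subset $\widetilde Y\subset\widetilde X$.

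The key steps, in order, are as follows. First, I would enlarge $I_f$ to a proper analytic subset $Y\subset X$: namely $Y$ should contain $I_f$, the images of the exceptional locus of $\pi$ under forward iterates (finitely many, but we only need $Y$ to absorb the ``bad'' part of the dynamics), and the locus where $\pi$ fails to be a biholomorphism; set $E\coloneqq \pi^{-1}(Y)$ and $\widetilde Y \coloneqq E\cup I_{\widetilde f}$. Over $X\setminus Y$ the map $\pi$ is an isomorphism onto its image, and $\widetilde f$ restricts to a well-defined holomorphic map there that is conjugate to $f|_{X\setminus Y}$. Second, take any $x\in\IP_n^0(X\setminus Y, f)$; its unique preimage $\widetilde x = \pi^{-1}(x)$ is then a periodic point of $\widetilde f$ of period $n$ whose orbit avoids $\widetilde Y$, and — because $\pi$ is a local biholomorphism near $\widetilde x$ and the whole orbit — $\widetilde x$ is \emph{isolated} as a fixed point of $\widetilde f^{\,n}$ iff $x$ is isolated as a fixed point of $f^n$. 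This gives an injection $\IP_n^0(X\setminus Y, f)\hookrightarrow \IP_n^0(\widetilde X\setminus\widetilde Y, \widetilde f)$, whence by Theorem \ref{t:per-kahler} (which bounds the latter by $e^{n(h_a(\widetilde f)+o(1))}$) we get the desired bound for periodic points avoiding $Y$. Third — and this is the step that needs the most care — I must pass from ``avoiding $Y$'' to ``avoiding $I_f$'', i.e.\ control the isolated periodic points whose orbit meets $Y\setminus I_f$. Here the point is that $Y\setminus I_f$ is where $f$ is still holomorphic; one runs the argument of \cite{DNT-BLMS} (or \cite{DNT-ind}) directly on $X$: the periodic points with orbit inside the closed set on which $f$ is holomorphic but which meet the analytic set $Y\setminus I_f$ are governed by the dynamics of $f$ restricted to the (lower-dimensional, $f$-almost-invariant after passing to a suitable finite union of iterated images) analytic subsets contained in $Y$, and an induction on $\dim X$ — each stratum being itself a compact complex variety in the Fujiki class with an induced dominant (or at least dynamically-degree-controlled) meromorphic self-map — bounds their number by $e^{n(h_a(f)+o(1))}$ as well, since dynamical degrees of restrictions to invariant subvarieties are dominated by those of $f$.

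I expect the main obstacle to be precisely this third step: making the stratification of $Y$ genuinely compatible with the dynamics. The map $f$ need not preserve $Y$ (it only maps $I_f$-avoiding orbits), and the exceptional locus of $\pi$ is not $f$-invariant, so one cannot simply restrict $f$ to a subvariety. The fix is the standard device from \cite{DNT-BLMS}: replace $Y$ by the analytic set $\bigcup_{j\ge 0} \overline{f^j(\cdot)}$-type construction to obtain an $f$-invariant (in the meromorphic sense) proper analytic subset, observe that its irreducible components are permuted by $f$, pass to a power of $f$ so that each component is mapped into itself, and apply the inductive hypothesis on each — while checking that the orbit-avoids-$I_f$ hypothesis is inherited and that replacing $f$ by $f^m$ only rescales the entropy bound by $m$, hence disappears after dividing by $n$ and taking $\limsup$. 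A secondary technical point is ensuring that "isolated as a fixed point of $f^n$'' is a bimeromorphically robust notion along orbits avoiding the modification locus, which is exactly why $Y$ was chosen to contain the non-isomorphism locus of $\pi$; this guarantees that the injection in the second step preserves isolatedness in both directions and so no periodic points are lost or spuriously created.
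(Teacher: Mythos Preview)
Your first two steps are correct and amount precisely to the paper's Lemma~\ref{l:singular-periodic}: over the Zariski open set $U\subset X$ where the modification $\pi$ is an isomorphism, the injection $\IP_n^0(X\setminus Y,f)\hookrightarrow\widetilde\IP_n(\widetilde f)$ together with Theorem~\ref{t:per-kahler} gives the bound for periodic points in $U$. The real content of the theorem is your third step, and there the proposal has a genuine gap.

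The device you invoke---replacing $Y$ by something like $\bigcup_{j\ge0}\overline{f^j(Y)}$ to obtain an $f$-invariant proper analytic subset---does not work: this union need not be analytic, and its Zariski closure may well be all of $X$ (there is no reason the forward iterates of the exceptional locus of an arbitrary modification should stabilise). This is not a device from \cite{DNT-BLMS}; that paper works directly on a K\"ahler manifold and never needs to manufacture an invariant subvariety from a non-invariant one. The paper's actual argument for this step is quite different and is the heart of the proof: one argues by contradiction, assuming $\#(P_{n_i}\cap Z)\ge\delta\,\#P_{n_i}$ along a subsequence for some proper analytic $Z$, and then chooses $Z$ of \emph{minimal dimension} with this property. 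Because $f$ acts bijectively on $P_n$, the same lower bound holds for every $f^s(Z)$; an inclusion--exclusion argument then forces $f^s(Z)=f^t(Z)$ for some $s<t$ (otherwise the pairwise intersections, of strictly smaller dimension, would carry too many periodic points, contradicting minimality). Thus $Z$ is forced to be $f$-periodic, and after replacing $f$ by an iterate one may take $Z$ to be $f$-invariant; only then can one restrict and induct.

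A second, smaller gap: you assert that ``dynamical degrees of restrictions to invariant subvarieties are dominated by those of $f$''. This is exactly what is needed to close the induction, but it is not available off the shelf in the Fujiki/meromorphic setting (the known reference \cite[Proposition~A.11]{NZ09} treats only holomorphic self-maps of compact K\"ahler manifolds). The paper proves this separately as Proposition~\ref{p:small-dyn}, via a careful argument with pullbacks of ``nice'' currents through a log resolution; it does not follow from the product formula (see Remark~\ref{r:no product formula}), since the lifted map $\widetilde f$ need not be dominant on the relevant components of $\Pi^{-1}(Z)$.
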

In comparison, we refer to Remark \ref{rem:other-alg-closed} for projective varieties in any algebraically closed field and Remark \ref{rem-fujiki-surfaces} for a precise estimate in the case of bimeromorphic maps of Fujiki surfaces.

\begin{remark}[Strategy towards the proofs]
Let us briefly explain our strategy to prove Theorems \ref{t:main1} and \ref{t:main2}. 
The approach to establishing the lower bound in Theorem \ref{t:main1} involves proving an equidistribution of repelling points outside a proper analytic subset \(Y\) (see Theorem \ref{t:equi-mero}). 
In this context, we follow a similar strategy as in \cite[Section 3]{DNT-ind} to construct an almost maximal number of inverse branches for a generic small ball with an additional property that they are disjoint with any given proper analytic subset \(Y\). 
In terms of the upper bound for both Theorems \ref{t:main1} and \ref{t:main2}, we first establish a comparison proposition that reveals the relation between the dynamical degrees of a dominant meromorphic map of a Fujiki variety and the dynamical degrees of its restriction to any proper analytic periodic subset (see Proposition \ref{p:small-dyn}). Then we control the growth of the intersection of periodic points with any proper analytic subset.

Since we are working with a singular Fujiki variety and the periodic points are not necessarily isolated, the classical tools like the Lefschetz fixed point formula could not be applied. 
Besides, our current approach to obtain the upper bound requires some new arguments together with results in \cite{DNT-ind} and \cite{DNT-BLMS}. 
The latter uses a very recent theory of density of positive closed currents developed by Sibony and the first author.  
\end{remark}

We note that the current techniques towards Theorem \ref{t:main1} in this paper do not allow us to answer the following natural question.
\begin{problem}
With the same notion and assumption as in Theorem \ref{t:main1}, is there a positive number \(\delta<d_t(f)\) such that 
\(\#\IP_n^0(X\setminus Y,f)=d_t^n+O(\delta^n)\) as \(n\) tends to infinity?
\end{problem}

\begin{remark}[Difficulties in the proofs]
The following are the two difficulties we met.
First, when we modify the given Fujiki variety to get a K\"ahler model, a priori, the number of isolated periodic points may be increased or decreased; in view of Remark \ref{r:mul}, even if \(f\) is a holomorphic self-map of a Fujiki manifold, we could not simply reduce the problem to its K\"ahler model. 
Second, a dominant meromorphic map of a Fujiki variety may no longer be dominant on subvarieties we consider (see Remark \ref{r:no product formula}).
\end{remark}

After preparing some preliminary results in Section \ref{s:pre}, we prove Theorem \ref{t:main2} in Section \ref{s:main2} and Theorem \ref{t:main1} in Section \ref{s:proofs of conj}. 
Let us end the section by summarizing the notations involving the periodic points.
\begin{notation}\label{n:periodic}
Let $f\colon X\to X$ be a dominant meromorphic map on a compact complex variety \(X\) of dimension \(k\), and  \(I_f\) the (first) indeterminacy set of \(f\). 
Denote by \(X_\reg\) (resp. \(X_\sing\)) the set of smooth points (resp. singular points) of \(X\).
\begin{enumerate}
\item An \textit{isolated \(f\)-periodic point} of period $n$ is a point $a\in X$ such that $(a,a)$ is an isolated intersection of the graph $\Gamma_n$ of $f^n$ (which is an irreducible analytic subset of dimension $k$ in $X\times X$) with the diagonal $\Delta$ of \(X\times X\). 
\item Let \(a\in X_\reg\) be an \(f\)-periodic smooth  point of period \(n\).  
Then its \textit{multiplicity} as an \(f\)-periodic point  is the multiplicity of the intersection of \(\Gamma_n\) and \(\Delta\) at \((a,a)\). 
\item Let \(a\in X_\reg\) be an \(f\)-periodic smooth point of period \(n\). 
We say that \(a\in X\) is a \textit{repelling point} of period \(n\) if \(a\not\in I_{f^n}\) holds, and all the eigenvalues of the differential of \(f^n\) at \(a\) have modulus greater than 1. 
Clearly, a repelling periodic point is of multiplicity 1. 
\end{enumerate} 
We use the following notation throughout this paper. 
We shall omit the symbol \(f\) if no confusion arises. 
Let \(A\) be a subset of \(X\). 
\begin{longtable}{p{1cm} p{1cm} p{13cm}}
$\IP_n(f)$ && the set of all the \(f\)-periodic points of period \(n\) without counting with multiplicities\\
$\IP_n(A,f)$ && the set of all the \(f\)-periodic points of period \(n\) in \(A\) without counting with multiplicities; these orbits are not necessarily contained in \(A\).\\
$\IP_n^0(A,f)$ && the set of all the \(f\)-periodic points of period \(n\) without counting with multiplicities whose orbits are well-defined and contained in \(A\)\\
$\widetilde \IP_n(f)$ &&the set of all the \(f\)-periodic smooth points of period \(n\) counted with multiplicities \\
$\RP_n(f)$ && the set of all the repelling points of period \(n\) \\
$\RP_n(A,f)$ &&the set of all the repelling points of period \(n\) in $A$\\
$\RP^0_n(A,f)$ && the set of all the repelling points of period \(n\) whose orbits are well-defined and contained in $A$
\end{longtable}
\vspace{-1em}
\end{notation}

\subsubsection*{\textbf{\textup{Acknowledgements}}}
We would like to thank KAIST university and Dr. Nguyen Ngoc Cuong for hospitality and Professor Keiji Oguiso for the valuable discussions on the Fujiki varieties. 
The first author is supported by the NUS grant A-0004285-00-00 and the MOE grant
MOE-T2EP20120-0010. 
The second author is supported by the Institute for Basic Science (IBS-R032-D1-2023-a00).

\section{Preliminary}\label{s:pre}
In this section, we prepare some basic notions and terminologies to be used in the proofs of Theorems \ref{t:main1} and \ref{t:main2}. 
For the basic notion and terminology of complex spaces, we refer the reader to \cite{AG06} and \cite{Uen75}. 
The term \textit{(sub-)variety} in this paper will always denote a compact irreducible and reduced complex (sub-)space. 
A holomorphic map \(\pi\colon X\to Y\) of complex spaces is called a \textit{modification} if (1) \(\pi\) is proper and surjective, and (2) there exist nowhere Zariski dense analytic subsets \(M\) of \(X\) and \(N\) of \(Y\) such that \(\pi\) induces a biholomorphic map from \(X\setminus M\) onto \(Y\setminus N\). 
A \textit{resolution} of singularities of a variety \(X\) is a modification \(\widetilde X\to X\) from a nonsingular variety \(\widetilde X\).

\begin{definition}[Blowup]
Let \(X\) be a complex space and \(\mathcal{I}\subseteq\mathcal{O}_X\) an ideal sheaf of finite type, and \(Y\) the subspace of \(X\) defined by \(\mathcal{I}\).
The blowup of \(X\) centered at \(\mathcal{I}\) is a holomorphic map \(\pi\colon \widehat{X}\to X\) from a complex space \(\widehat{X}\) such that the following hold.
\begin{enumerate}
\item The inverse image of the ideal \(\pi^*(\mathcal{I})\cdot\mathcal{O}_{\widehat{X}}\subseteq\mathcal{O}_{\widehat{X}}\) generated by the functions \(\{s\circ \pi,s\in\mathcal{I}\}\) is an invertible sheaf, and hence the subspace \(\widehat{Y}\coloneqq\pi^{-1}(Y)\) of \(\widehat{X}\) defined by \(\pi^*(\mathcal{I})\cdot\mathcal{O}_{\widehat{X}}\) is a (not necessarily irreducible) hypersurface. 
Moreover, \(\pi\) induces an isomorphism \(\widehat{X}\setminus\widehat{Y}\cong X\setminus Y\) and thus is a modification.
\item If \(\varphi\colon X'\to X\) is another holomorphic map of complex spaces such that the ideal \(\varphi^*(\mathcal{I})\cdot\mathcal{O}_{X'}\) is invertible, then there exists a unique holomorphic map \(\tau\colon X'\to \widehat{X}\) such that \(\pi\circ\tau=\varphi\). 
In particular, the blowup, if it exists, is unique. 
\end{enumerate}
When \(\mathcal{I}=\mathcal{I}_Y\) is the ideal of the holomorphic functions on \(X\) vanishing along \(Y\), we say also that \(\pi\) is the blowup of \(X\) along a subspace \(Y\).
In this case, for a subspace \(Z\subseteq X\), we denote by \(\widehat{Z}\) the closure of \(\pi^{-1}(Z\setminus Y)\) in \(\widehat{X}\), which is called the \textit{strict transform} of \(Z\) through \(\pi\). 
\end{definition}
By Hironaka Chow lemma (see \cite[Corollary 2]{Hir75}), if \(\pi\colon X\to Y\) is a modification of compact complex spaces, then there exists a commutative diagram
\[
\xymatrix{
&\widehat{X}\ar[dl]_f\ar[dr]^{g}&\\
X\ar[rr]^\pi&&Y
}
\]
where \(f\) and \(g\) are obtained by finite sequences of blowup along smooth centers.

Now we recall the following equivalent definitions of compact Fujiki varieties. 
\begin{definition}[Fujiki variety]\label{d:fujiki}
Let \(X\) be a compact complex variety.
We say that \(X\) is of \textit{Fujiki class} \(\mathcal{C}\) (or \textit{Fujiki variety} for short) if one of the following equivalent conditions holds.
\begin{enumerate}
\item There is a holomorphic surjective map \(\widetilde{X}\to X\)
from a compact K\"ahler manifold $\widetilde X$ (thus, \(\dim(\widetilde{X})\geq\dim(X)\)); see \cite[Definition 1.1]{Fujiki-invent} and \cite[Lemma 4.6]{Fujiki-prims}. 
When $X$ is smooth, this is the original definition of \textit{Fujiki manifolds}. 
\item There is a holomorphic and bimeromorphic map \(\widetilde{X}\to X\) from a compact K\"ahler manifold $\widetilde X$; see \cite[Theorem 5]{Varouchas-mathann}. 
\end{enumerate}
\end{definition}
We note that \((1)\Rightarrow (2)\) is due to Hironaka's flattening theorem (see~\cite[Theorem 5 and its proof]{Varouchas-mathann} and \cite{Hir75}). 
By Blanchard's theorem \cite{Blanchard-ens}, a blowup of a compact K\"ahler manifold along a smooth submanifold is also a compact K\"ahler manifold (cf.~\cite[Proposition 3.24]{Voi02}); it follows that a subvariety of a Fujiki variety is also Fujiki (cf.~\cite[Lemma 4.6]{Fujiki-prims}). 

Next, we recall the notion of singular K\"ahler spaces, which was first introduced by Grauert \cite[Page  346]{Gra62} and was later modified by Moishezon \cite{Moi75}. 
We refer the reader to \cite[Chapter II]{Varouchas-mathann}, \cite[Definition 1.2]{Fujiki-prims}, \cite[Sections 2 and 3]{HP16}, \cite[Section 3]{GK-AIF}, and the references therein for more information.
\begin{definition}[Singular K\"ahler space]\label{d:cpt}
Let \(X\) be an irreducible and reduced complex space. 
Let \(\mathscr{C}_{X,\mathbb{R}}^{\infty}\) be the sheaf of smooth real-valued functions on \(X\), and \(\textup{PH}_{X,\mathbb{R}}\) the image of the real part map \(\mathcal{O}_X\to \mathscr{C}_{X,\mathbb{R}}^{\infty}\) which we call the sheaf of real-valued pluriharmonic functions on \(X\). 
Set \(\mathscr{K}_{X,\mathbb{R}}^1\coloneqq\mathscr{C}_{X,\mathbb{R}}^{\infty}/\textup{PH}_{X,\mathbb{R}}\).

An element \(\kappa\) in \(H^0(X,\mathscr{K}_{X,\mathbb{R}}^1)\) is called a \textit{K\"ahler metric} on \(X\) if 
it can be represented by a family \(\{(U_j,\varphi_j)\}_{j\in J}\) for some open covering \(\{U_j\}_{j\in J}\) of \(X\) satisfying the following conditions.
\begin{enumerate}
\item The function \(\varphi_j\colon U_j\to\mathbb{R}\) is a smooth strictly plurisubharmonic (psh for short) \(\mathscr{C}^{\infty}\)-function on \(U_j\) for all \(j\in J\), i.e., \(\varphi_j\) is locally induced by a smooth strictly psh \(\mathscr{C}^{\infty}\)-function on an open subset of \(\mathbb{C}^{N_j}\) via a local embedding \(U_j\hookrightarrow\mathbb{C}^{N_j}\); and
\item  On \(U_{ij}\coloneqq U_i\cap U_j\), the function \(\varphi_i|_{U_{ij}}-\varphi_j|_{U_{ij}}\) is pluriharmonic. 
\end{enumerate}
We say that \(X\) is \textit{K\"ahler} if there exists a K\"ahler metric on \(X\).
An element \(c\in H^1(X,\textup{PH}_{X,\mathbb{R}})\) is called a \textit{K\"ahler class} on \(X\) if there exists a K\"ahler metric \(\kappa\) on \(X\) such that \(c\) is the image of \(\kappa\) via the natural map
\[
H^0(X,\mathscr{K}_{X,\mathbb{R}}^1)\xrightarrow{P_0} H^1(X,\textup{PH}_{X,\mathbb{R}})
\]
which is the connecting homomorphism in degree 0 associated with the short exact sequence 
\[0\to \textup{PH}_{X,\mathbb{R}}\to \mathscr{C}_{X,\mathbb{R}}^{\infty}\to\mathscr{K}_{X,\mathbb{R}}^1\to 0.\] 
By \cite[Remark 3.2]{GK-AIF}, as \(\mathscr{C}_{X,\mathbb{R}}^{\infty}\) is a fine sheaf, the map \(P_0\) is always surjective.
Besides, there is also a natural map
\[
H^1(X,\textup{PH}_{X,\mathbb{R}})\xrightarrow{P_1} H^2(X,\mathbb{R})
\]
which is the connecting homomorphism in degree 1 associated with the short exact sequence 
\[0\to \mathbb{R}_X\xrightarrow{i\cdot}\mathcal{O}_X\xrightarrow{\textup{Re}}\textup{PH}_{X,\mathbb{R}}\to 0.\]
By \cite[Remark 3.7]{HP16}, if \(X\) has at worst rational singularities, then \(P_1\) is injective (cf.~\cite[Proposition 3.5]{GK-AIF}), in which case, one can define the intersection product on \(H^1(X,\textup{PH}_{X,\mathbb{R}})\) via the cup product. 
The cohomology group \(H^1(X,\textup{PH}_{X,\mathbb{R}})\) is also known as the \textit{Bott-Chern cohomology} (see \cite[Definition 3.1]{HP16}; cf.~\cite[Definition 4.6.2]{BG13}). 
If \(X\) is a compact K\"ahler manifold, then from the Hodge theory, there is a natural isomorphism  between the Bott-Chern cohomology group and the Hodge group
with real coefficients  \(H^1(X,\textup{PH}_{X,\mathbb{R}})\cong H^{1,1}(X)\cap H^2(X,\mathbb{R})\). 
\end{definition}

\begin{definition}[{Differential form; see \cite[Section 1]{Dem85}}]\label{d:differential}
Let \(X\) be an analytic variety.
A differential form \(\omega\) of type \((p,q)\) on \(X\) is a differential form (of class \(\mathscr{C}^{\infty}\)) of type \((p,q)\) on the smooth locus \(X_{\reg}\) such that for every point \(x\in X_{\sing}\) in the singular locus, there is an open neighborhood \(x\in U\subseteq X\) and a closed embedding \(U\hookrightarrow V\) into an open set \(V\subseteq \mathbb{C}^N\) such that there exists a differential form (of class \(\mathscr{C}^{\infty}\)) \(\omega_V\) of type \((p,q)\) on \(V\) such that \(\omega_V|_{U\cap X_\reg}=\omega|_{U\cap X_\reg}\). 
Denote the sheaf of \((p,q)\)-forms by \(\mathcal{A}_X^{p,q}\) and denote by \(\mathcal{A}_{\mathbb{R},X}^{p,p}\) the sheaf of real forms of bidegree \((p,p)\).
Note that we only consider \(\mathscr{C}^{\infty}\) forms here.

A \textit{K\"ahler form} on \(X\) is a positive closed real \((1,1)\)-form \(\omega\in\mathcal{A}_{\mathbb{R},X}^{1,1}(X)\) such that for every point \(x\in X_\sing\) there is an open neighborhood \(x\in U\subseteq X\), a closed embedding \(U\hookrightarrow  V\) into an open subset \(V\subseteq\mathbb{C}^N\), and a strictly psh \(\mathscr{C}^{\infty}\)-function \(f\colon V\to\mathbb{R}\) such that \(\omega|_{U\cap X_\reg}=(i\partial\overline{\partial}f)|_{U\cap X_\reg}\).
We observe that, by applying the real operator \(i\partial\overline{\partial}\), if an analytic variety \((X,\kappa)\) is K\"ahler equipped with a K\"ahler metric \(\kappa=\{(U_j,\varphi_j)\}\) in the sense of Definition \ref{d:cpt},
we obtain a global section \(\mathcal{A}_{\mathbb{R},X}^{1,1}(X)\), that is a smooth K\"ahler form \(\omega\coloneqq\{(U_j,i\partial\overline{\partial}\varphi_j)\}\) (cf.~\cite[Page 23, \S 1.1]{Varouchas-mathann}).
However, a K\"ahler form does not necessarily determine a K\"ahler metric unless \(X\) is smooth. 

An element \(\kappa\in H^0(X,\mathscr{K}^{1}_{X,\mathbb{R}})\) is also known as a closed \((1,1)\)-form via the real operator \(i\partial\overline{\partial}\). 
If \(\omega_1\) and \(\omega_2\) are two  closed real \((1,1)\)-forms (which are induced from \(i\partial\overline{\partial}\))  
such that \(P_0(\omega_1)=P_0(\omega_2)\), then we have \(\omega_1=\omega_2+i\partial\overline{\partial}u\) where \(u\in\mathscr{C}_{X,\mathbb{R}}^{\infty}\) is a globally smooth real-valued function (cf.~\cite[Remark 3.2]{HP16}); in this case, we denote by \([\omega_1]=[\omega_2]\in H^1(X,\textup{PH}_{X,\mathbb{R}})\).

Let \(f\colon X\to Y\) be a proper holomorphic map between compact complex spaces. 
By \cite[Lemma 1.3]{Dem85}, we can define the pullback of differential forms by gluing the local definitions together so as to obtain a pullback map:
\[
f^*\colon \mathcal{A}^{p,q}(Y)\to\mathcal{A}^{p,q}(X).
\]
\end{definition}

\begin{remark}\label{r:2cpt}
In \cite[\S 1.1, Page 384]{Zhang-distribution}, a \textit{K\"ahler variety} \(X\) with a \textit{K\"ahler form} \(\omega\) is defined as an analytic variety which admits a finite map \(f\colon X\to M\) to a K\"ahler manifold \(M\) such that \(\omega=f^*\eta\) for some K\"ahler form \(\eta\) on \(M\). 
In view of \cite[Proposition 3.6]{GK-AIF} (cf.~\cite{Vaj96}), if a compact analytic variety is a K\"ahler variety in the sense of \cite{Zhang-distribution}, it is K\"ahler in the sense of Definition \ref{d:cpt}. 
Besides, \cite[Conjecture 1.2.2]{Zhang-distribution} asserts the asymptotic growth for periodic points in any periodic subvarieties. 
As every subvariety of a K\"ahler space is also K\"ahler (see \cite[1.3.1]{Varouchas-mathann} or \cite[Proposition 3.6]{GK-AIF}) and the restriction of any iterate of a polarized self-map to an analytic invariant subvariety is also polarized, our Corollary \ref{c:swz} gives a positive answer to \cite[Conjecture 1.2.2]{Zhang-distribution}.
\end{remark}

In the following, we stick to Demailly's book \cite{Demailly-book}  for the basic notions involved. 
Now, we recall the pullback operator on currents and introduce a notion called \textit{nice current} which will be used in the proof of Proposition \ref{p:small-dyn}. 

Let $f\colon X\to Y$ be a meromorphic map between compact complex manifolds, i.e., \(f\) is a holomorphic map defined on a dense (analytic) Zariski open subset of \(X\) such that the closure \(\Gamma\) of its graph in $X\times Y$ is an irreducible analytic set of dimension equal to  $\dim (X)$. 
Here, we don't assume that $f$ is dominant or generic fibers are finite. 
Denote by $\pi_X$ and $\pi_Y$ the projections from $X\times Y$ to its factors respectively. 
The \textit{indeterminacy set} of \(f\) consists of the point $x\in X$ such that the set $\pi_X^{-1}(x)\cap\Gamma$ is not a singleton or equivalently of positive dimension.  
The map \(f\) induces linear operators on forms and currents.
If $T$ is a current on \(Y\), then its pullback $f^*(T)$ on \(X\) is formally defined by
$$f^*(T)\coloneqq(\pi_X)_*(\pi_Y^*(T)\wedge [\Gamma]),$$
when the last wedge-product is well-defined (cf.~\cite{DS07}).  

If $T$ is a smooth form, then $f^*(T)$ is well-defined and is a smooth form outside the indeterminacy locus of $f$; 
this form is locally integrable of class $L^1$ and then it defines a current of order 0 without mass on any proper analytic subsets of $X$. 
In particular, to define $f^*(T)$ for \(T\) smooth, it is enough to define it in any dense Zariski open subset. 
As an extension of this special case, we introduce the following notion.

\begin{definition}[Nice currents]
Let \(X\) be a compact K\"ahler manifold. 
A positive closed current \(T\) on \(X\) is called \textit{nice} if \(T\) has no mass on proper analytic subsets of \(X\) and the restriction of \(T\) to some dense Zariski open set is given by a continuous differential form. 
For a nice positive closed current $\Omega$ of maximal degree on $X$, we define its mass
$$\int_X^\bullet \Omega$$
as the integral of $\Omega$ on a Zariski open set where it is continuous. 
As a nice current has no mass on any proper analytic subset, the mass of \(\Omega\) is independent of the choice of the Zariski open set. 
\end{definition}

We have the following pullback of nice currents along a dominant meromorphic map.

\begin{lemma}\label{l:nice-pullback}
Let $\pi\colon X'\to X$ be a dominant meromorphic map between compact K\"ahler manifolds, not necessarily of the same dimension. 
Let $T$ be a nice positive closed current on $X$. 
Let $U$ be a Zariski open subset of $X'$ such that $\pi$ is holomorphic on $U$ and $T$ is given by a continuous differential form on $\pi(U)$. 
Then the form $\pi^*(T)$ on $U$ defines a nice positive closed current on $X'$ which is independent of the choice of $U$. We will denote it by $\pi^\bullet(T)$. 
\end{lemma}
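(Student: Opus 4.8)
The plan is to reduce the statement to its content piece by piece: first that $\pi^*(T)$, defined as a smooth (here continuous) form on the Zariski open set $U$, extends to a positive closed current on all of $X'$; second that this current is nice; and third that it does not depend on $U$.

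\medskip
\textbf{Step 1: The form extends trivially.} On $U$ the pullback $\pi^*(T)$ makes sense as a continuous positive form, since $\pi$ is holomorphic there and $T$ is continuous on $\pi(U)$; positivity is preserved because $\pi$ is a holomorphic submersion on a dense open subset of $U$ (shrinking $U$ if needed to discard the ramification locus), and pullback of a positive $(p,p)$-form under a holomorphic map is positive. The key point is that this form has locally finite mass near $X'\setminus U$. Here I would invoke the standard fact (already alluded to in the paragraph preceding the Nice Currents definition) that for $T$ smooth the pullback $f^*(T)=(\pi_X)_*(\pi_Y^*(T)\wedge[\Gamma])$ is an $L^1_{\mathrm{loc}}$ form of order $0$ with no mass on proper analytic sets; the same computation applies to a continuous $T$ once one works on the desingularized graph $\widehat\Gamma\to X'$, which is a modification, and pushes forward the continuous form $\widehat\pi_Y^*(T)$ restricted to $\widehat\Gamma$. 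Concretely, I would take a resolution $\widehat\Gamma$ of the graph with projections $p\colon\widehat\Gamma\to X'$ and $q\colon\widehat\Gamma\to X$, observe $q^*(T)$ is continuous where $q$ is submersive and $L^1_{\mathrm{loc}}$ everywhere (again by the smooth case, $T$ being a uniform limit of smooth forms or just by local boundedness of a continuous form pulled back under a holomorphic map of generically maximal rank), and then set $\pi^\bullet(T):=p_*(q^\bullet T)$; the pushforward of an $L^1_{\mathrm{loc}}$ positive form under a proper modification is $L^1_{\mathrm{loc}}$, positive, and agrees with $\pi^*(T)$ on $U$.

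\medskip
\textbf{Step 2: Closedness and niceness.} Closedness follows because on the Zariski open dense set $p(\text{(submersive locus)})$ the current is a closed form, and a positive current of order $0$ that is closed on a Zariski open set with no mass on the complement is closed everywhere — more cleanly, $\pi^\bullet(T)=p_*(q^\bullet T)$ and $q^\bullet T$ is closed as a current (same argument one level down, or directly: $q^*T$ is closed where smooth, has no mass on analytic sets, hence is a closed current by the support theorem), so its pushforward under the holomorphic map $p$ is closed. By construction the restriction of $\pi^\bullet(T)$ to the Zariski open set $U$ is continuous, and it has no mass on proper analytic subsets of $X'$ because it is the pushforward of a form with no mass on proper analytic subsets (pushforward under a modification cannot create mass on a lower-dimensional set: the preimage of a proper analytic subset of $X'$ is a proper analytic subset of $\widehat\Gamma$, and $q^\bullet T$ has no mass there). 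Hence $\pi^\bullet(T)$ is nice.

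\medskip
\textbf{Step 3: Independence of $U$.} If $U_1,U_2$ are two admissible Zariski open sets, the two resulting currents agree on $U_1\cap U_2$, which is again a dense Zariski open set, so their difference is a current of order $0$ supported on a proper analytic subset; since both currents have no mass on proper analytic subsets, the difference has no mass at all, i.e.\ is zero. Equivalently, one can note the construction via the resolution of the graph did not use $U$ at all, and the resulting $\pi^\bullet(T)$ restricts on every admissible $U$ to $\pi^*(T)$ by functoriality of pullback of continuous forms under holomorphic maps.

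\medskip
\textbf{Main obstacle.} The one delicate point is Step 1 — verifying the local integrability of $\pi^*(T)$ near the indeterminacy set and the absence of mass on analytic subsets — since $T$ is only continuous rather than smooth, one cannot simply quote the textbook statement for smooth forms verbatim. The cleanest route, which I would adopt, is to pass to the desingularized graph so that $\pi$ becomes a composition of a submersion-on-a-dense-set with a proper modification, reducing everything to the elementary fact that a continuous positive form pulled back under a holomorphic map of generically maximal rank is locally bounded on the locus of maximal rank, hence $L^1_{\mathrm{loc}}$ across the (measure-zero) degeneracy locus, combined with the standard fact that proper pushforward of such a form stays $L^1_{\mathrm{loc}}$ and mass-free on analytic subsets.
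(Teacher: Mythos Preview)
Your Step 3 (independence of $U$) is fine and matches the paper's one-line observation. The genuine gap is in Step 1, and it is precisely the ``main obstacle'' you flag at the end --- your proposed resolution does not close it.

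Passing to the desingularized graph replaces the meromorphic $\pi$ by a holomorphic $q\colon\widehat\Gamma\to X$, but it does \emph{not} make the mass estimate elementary. Your argument ``a continuous positive form pulled back under a holomorphic map of generically maximal rank is locally bounded on the locus of maximal rank, hence $L^1_{\mathrm{loc}}$'' conflates two different bad sets. The degeneracy locus of $q$ is indeed harmless (pullback of a continuous form under a holomorphic map is continuous everywhere, no submersivity needed). The real problem is $q^{-1}(X\setminus W)$, where $W\subset X$ is the Zariski open set on which $T$ is continuous. Near $X\setminus W$ the coefficients of $T$ can blow up (niceness only says there is no mass concentrated on the analytic set, not that $T$ is bounded nearby), so $q^*T$ can blow up near $q^{-1}(X\setminus W)$, and nothing you wrote bounds its mass there. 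Your parenthetical ``$T$ being a uniform limit of smooth forms'' would only give local uniform approximation on $W$, which again says nothing across the boundary.

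The paper's proof handles exactly this point by a different mechanism: it invokes the regularization theorem of Dinh--Sibony \cite{DS-ens} to produce \emph{global} smooth positive closed approximants $T_n$ with uniformly bounded mass converging locally uniformly to something $\geq T$ on $W$; then the mass of $\pi^*(T_n)$ is controlled cohomologically (mass of a positive closed current on a compact K\"ahler manifold depends only on its class), a limit $S$ dominates $\pi^*T$ on $U$, hence $\pi^*T$ has finite mass, and Skoda's extension theorem gives the closed extension by zero. The cohomological mass bound is the step your graph construction has no substitute for.
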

\begin{proof}
The independence of $U$ is clear as nice positive closed currents have no mass on proper analytic subsets. 
Therefore, if $\pi^\bullet(T)$ exists, it has to be the extension by zero of the current $\pi^*(T)$ on $U$. 
It is enough to show that the mass of $\pi^*(T)$ is finite. Indeed, by Skoda's theorem \cite{Sko82}, the finite mass allows us to extend $\pi^*(T)$ by zero to a positive closed current on $X'$. 

By the regularization theorem \cite[Corollary 1.2 and Proposition 4.6]{DS-ens}, 
there is a sequence of smooth positive closed forms $T_n$ on $X$ satisfying the following: (1) the mass  $\|T_n\|$ is bounded by a constant which is independent of $n$, (2) $T_n$ converges weakly to a positive closed current $T'\geq T$, and (3) the convergence is locally uniform on $\pi(U)$. 

Recall that the mass of a positive closed current on a compact K\"ahler manifold depends only on its cohomology class. 
Observe that the cohomology class of $T_n$ is bounded independently of $n$. 
It follows that the cohomology class of $S_n\coloneqq\pi^*(T_n)$ is also bounded. Hence the mass $\|S_n\|$ (which is well-defined as $T_n$ is smooth) is bounded by a constant. So this sequence is relatively compact and by taking a subsequence we can assume that \(S_n\) converges to some positive closed current $S$ on $X'$. 
As $S\geq \pi^*(T)$ on $U$ by using the uniform convergence of $T_n$ and $S$ is defined on the whole $X'$, we deduce that \(\pi^*(T)\) has finite mass. 
\end{proof}

\begin{remark}\label{r:nice-pullback-Fujiki}
Let $\pi'\colon X''\to X'$ and \(\pi\colon X'\to X\) be  dominant meromorphic maps between compact K\"ahler manifolds. Then for a nice current \(T\) on \(X\), we have 
$$(\pi\circ\pi')^\bullet (T) = (\pi')^\bullet \pi^\bullet (T).$$
\end{remark}

To end this preliminary section, we review the dynamical degrees of a dominant meromorphic map, which are the key invariants in the dynamical systems. 
\begin{definition}[Dynamical degrees]\label{d:dynamical-degree}
Let \(f\colon X\to X\) be a dominant meromorphic map on a compact K\"ahler manifold of dimension \(k\), and \(\omega\) a normalized K\"ahler form so that \(\omega^k\) defines a probability measure on \(X\). 
For each \(p=0,\cdots,k\), the \textit{\(p\)-th dynamical degree} of \(f\) is given by
\[
d_p(f)\coloneqq\lim_{n\to\infty}\|(f^n)^*(\omega^p)\|^{1/n}=\lim_{n\to\infty}\|(f^n)^*\colon H^{p,p}(X,\mathbb{C})\to H^{p,p}(X,\mathbb{C})\|^{1/n}.
\]
Here, if \(T\) is a positive closed \((p,p)\) current on \(X\), then its mass is given by the formula \(\|T\|\coloneqq\left<T,\omega^{k-p}\right>\). 
We also use a fixed norm \(\|\cdot\|\) on \(H^{p,p}(X,\mathbb{C})\). 
The \textit{algebraic entropy} of \(f\) is defined by
\[
h_a(f)\coloneqq\max_{0\leq p\leq k}\text{log}\,d_p(f).
\]
It is known that the above limits exist and the topological entropy of \(f\) is always bounded by the algebraic entropy; see \cite{DNT12, DS-ens, Dinh-Sibony-Annals05, Gro77, Yom87} and the references therein.
Note also that the dynamical degrees are bimeromorphic invariants (see \cite{DS-ens}, \cite{Dinh-Sibony-Annals05} for details; cf.~\cite{DNT12,Tru15,Tru20} for some extensions of this result). 
Due to a mixed version of the Hodge-Riemann theorem \cite{DN-GAFA06, Gromov-90, Khovanskii-79, Teissier79, Timorin-1998},  the function \(p\mapsto \text{log}\,d_p(f)\) is concave, that is, \(d_p(f)^2\ge d_{p-1}(f)d_{p+1}(f)\). 

We say that \(f\) has a \textit{dominant topological degree}, if the last dynamical degree (also called the \textit{topological degree})  \(d_t\coloneqq d_k(f)\) is strictly larger than the other dynamical degrees (cf.~Remark \ref{r:proj:dominant} and Proposition \ref{p:dominant-int}).
By the log-concavity of dynamical degrees, this is equivalent to \(d_t>d_{k-1}(f)\). 
It is known that for a dominant meromorphic map with a dominant topological degree, the following weak limit of probability measures
\[
\mu\coloneqq\lim_{n\to\infty}\frac{1}{d_t^n}(f^n)^*\omega^k
\]
exists.
The probability measure \(\mu\) is called the \textit{equilibrium measure} of \(f\) which has no mass on any proper analytic subset of \(X\). 
We refer to \cite{DS-ens,DS06} for details.

Now, let \(f\colon X\to X\) be a dominant meromorphic map on a Fujiki variety \(X\). 
By Definition \ref{d:fujiki}, there is a blowup \(\pi\colon\widetilde{X}\to X\) from a compact K\"ahler manifold \(\widetilde{X}\) together with an induced dominant meromorphic map \(\widetilde{f}\) on \(\widetilde{X}\) such that \(\pi\circ\widetilde f=f\circ\pi\). 
Then we define the \(p\)-th dynamical degree \(d_p(f)\) as \(d_p(\widetilde{f})\), which is independent of the choice of the smooth K\"ahler model \(\widetilde{X}\). 
\end{definition}

When \(f\colon X\to X\) is a holomorphic self-map of a compact K\"ahler manifold \(X\) with a dominant topological degree, we extend \cite[Proposition 3.7]{MZ23} to the analytic case. 
\begin{proposition}\label{p:dominant-int}
Let \(f\colon X\to X\) be a surjective holomorphic self-map of a compact K\"ahler manifold of dimension \(k\).
Then \(f\) has a dominant topological degree if and only if \(f\) is int-amplified.
\end{proposition}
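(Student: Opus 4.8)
The plan is to reduce the statement to the known projective case \cite[Proposition 3.7]{MZ23} (equivalently \cite[Lemma 3.6]{Men20}), using the fact that for a holomorphic self-map $f$ of a compact Kähler manifold $X$, both the notion of dominant topological degree and the notion of int-amplified can be read off from the action of $f^*$ on the Bott--Chern (equivalently Dolbeault) cohomology group $H^{1,1}(X,\R)$ together with its pairing with the other cohomology groups; none of this actually uses projectivity, only the Hodge-theoretic formalism and the mixed Hodge--Riemann inequalities already recalled in Definition \ref{d:dynamical-degree}. For the direction ``int-amplified $\Rightarrow$ dominant topological degree'', I would argue as follows. Fix a Kähler class $[\omega]$; the hypothesis says there is a class $[\eta]:=f^*[\omega]-[\omega]$ that is Kähler (a priori one is given an ample \emph{integral} class, but on a compact Kähler manifold having $f^*[\omega]-[\omega]$ Kähler for \emph{some} Kähler $[\omega]$ is the right intrinsic formulation, and I would take this as the definition of int-amplified in the Kähler setting, noting it agrees with the projective one). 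Then $f^*[\omega]\ge [\omega]$ and more precisely $f^*[\omega]=[\omega]+[\eta]$ with $[\eta]$ Kähler, so iterating, $(f^n)^*[\omega]$ dominates a large multiple of $[\omega]$; comparing the intersection numbers $\big((f^n)^*[\omega]\big)^p\cdot[\omega]^{k-p}$ for $p=k$ versus $p=k-1$ and using positivity of all the classes involved, one gets $d_k(f)/d_{k-1}(f)\to\infty$-type growth, hence $d_t=d_k(f)>d_{k-1}(f)$. The cleanest way to make this quantitative is to use the log-concavity $d_p(f)^2\ge d_{p-1}(f)d_{p+1}(f)$ together with the estimate, valid for int-amplified maps, that the top-degree class grows strictly faster than the codimension-one class; this is exactly the computation carried out in the projective case and it transfers verbatim because it only manipulates cohomology classes and Kähler cone positivity.

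For the converse, ``dominant topological degree $\Rightarrow$ int-amplified'', the key input is a common eigenclass. Since $f$ is holomorphic, $f^*$ acts on $H^{1,1}(X,\R)$ preserving the (closed) Kähler cone's closure, i.e. the pseudo-effective or nef cone; by a Perron--Frobenius argument for cone-preserving operators there is a nonzero nef class $[\theta]$ with $f^*[\theta]=\lambda[\theta]$ where $\lambda$ is the spectral radius of $f^*$ on $H^{1,1}$, which equals $d_1(f)$. The hypothesis $d_t=d_k(f)>d_{k-1}(f)\ge$ (by log-concavity and $d_0=1$) $d_1(f)=\lambda$, so in particular $d_k(f)>\lambda\ge 1$. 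One then wants to upgrade the nef eigenclass $[\theta]$ to a genuine Kähler class and to show $f^*[\omega]-[\omega]$ is Kähler for a suitable $[\omega]$; here I would follow the structure of \cite[Proposition 3.7]{MZ23}: use that $d_k(f)>d_1(f)$ forces $f$ to be, in the appropriate sense, ``expanding'' on $H^{1,1}$, pick $[\omega]$ a Kähler class close to the eigendirection, and check directly that $f^*[\omega]-[\omega]$ lands in the Kähler cone by a compactness/openness argument on the cone, exactly as in loc.\ cit. The only place projectivity is used in \cite{MZ23} is to invoke the base-point-free theorem / ampleness criteria; on a Kähler manifold these are replaced by Demailly--Păun's characterization of the Kähler cone and by the numerical criterion for a class to be Kähler, both of which are available.

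The main obstacle I expect is precisely this last replacement: in the projective case one freely uses that a nef and big class with the right numerical positivity is (up to perturbation) ample, and that an int-amplified map can be detected by an \emph{integral} ample divisor class; in the Kähler category we only have real Bott--Chern classes and must argue entirely with the Kähler cone, its closure, and the Hodge--Riemann bilinear relations. So the real content of the proof is to re-derive the cohomological characterization ``$f$ int-amplified $\iff$ $f^*$ expands the Kähler cone in the sense that $f^*[\omega]-[\omega]$ can be made Kähler'' purely within Hodge theory, and then observe that this characterization is manifestly equivalent to $d_k(f)>d_{k-1}(f)$ via log-concavity of the dynamical degrees. Once that dictionary is set up, the equivalence follows formally; I would organize the write-up as (i) intrinsic Kähler-category definition of int-amplified and its equivalence with the projective one when $X$ is projective, (ii) Perron--Frobenius eigenclass on $H^{1,1}$, (iii) the two implications as above, with the positivity manipulations delegated to the cited projective arguments wherever they are cohomology-only.
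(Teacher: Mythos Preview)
Your forward direction (int-amplified $\Rightarrow$ dominant topological degree) is fine in spirit and matches what the paper does, which simply cites \cite[Lemma 3.4]{Zhong-Asian}. The real content is in the converse, and there your proposal has a genuine gap.

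You obtain by Perron--Frobenius a \emph{nef} eigenclass $[\theta]$ with $f^*[\theta]=d_1(f)\,[\theta]$ and observe $d_1(f)>1$. You then propose to ``upgrade the nef eigenclass to a genuine K\"ahler class'' and to find a K\"ahler $[\omega]$ with $f^*[\omega]-[\omega]$ K\"ahler by a ``compactness/openness argument on the cone''. But this does not follow: knowing only that the spectral radius $d_1(f)$ of $f^*|_{H^{1,1}}$ satisfies $d_1(f)<d_k(f)$ tells you nothing about the \emph{small} eigenvalues of $f^*|_{H^{1,1}}$. A priori $f^*|_{H^{1,1}}$ could have an eigenvalue of modulus $<1$, in which case no perturbation of the nef eigenclass will produce a K\"ahler $[\omega]$ with $f^*[\omega]-[\omega]$ K\"ahler. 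The existence of such an $[\omega]$ is \emph{equivalent} to all eigenvalues of $f^*|_{H^{1,1}}$ having modulus $>1$ (this is \cite[Theorem 1.1]{Zhong-Asian}), so you are assuming exactly what you must prove. The phrase ``$d_k(f)>d_1(f)$ forces $f$ to be expanding on $H^{1,1}$'' is the entire difficulty, not a lemma you can delegate.

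The paper's argument supplies the missing mechanism and it is short and purely linear-algebraic: since $f$ is finite surjective, the projection formula gives $f_*f^*=\deg(f)\cdot\mathrm{id}$ on $H^{k-1,k-1}(X,\R)$, so $f_*|_{H^{k-1,k-1}}=\deg(f)\cdot\bigl(f^*|_{H^{k-1,k-1}}\bigr)^{-1}$. Because $d_{k-1}(f)$ is the spectral radius of $f^*|_{H^{k-1,k-1}}$ and $\deg(f)=d_t>d_{k-1}(f)$, every eigenvalue of $f_*|_{H^{k-1,k-1}}$ has modulus $>1$. Poincar\'e duality then identifies $f^*|_{H^{1,1}}$ with the transpose of $f_*|_{H^{k-1,k-1}}$, so every eigenvalue of $f^*|_{H^{1,1}}$ has modulus $>1$, which is int-amplified. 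Note that the relevant comparison is with $d_{k-1}$, not with $d_1$; your Perron--Frobenius eigenclass on $H^{1,1}$ is a red herring, and no Demailly--P\u aun or cone-openness input is needed.
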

\begin{proof}
Recall that \(f\) is called int-amplified if all the eigenvalues of \(f^*|_{H^{1,1}(X,\mathbb{R})}\) are of modulus greater than 1 (see \cite[Theorem 1.1]{Zhong-Asian}).
Besides, if \(f\) is int-amplified, then it follows from \cite[Lemma 3.4]{Zhong-Asian} (cf.~\cite[Lemma 3.6]{Men20}) that \(f\) has a dominant topological degree. 
Hence, we only need to show the other direction by assuming in the following that \(f\) has a dominant topological degree. 
By the log-concavity of dynamical degrees (see Definition \ref{d:dynamical-degree}), we have 
\[d_t(f)\coloneqq d_k(f)>d_{k-1}(f)>\cdots>d_0(f).\] 
We note that \(d_{k-1}(f)\) is the spectral radius of \(f^*|_{H^{k-1,k-1}(X,\mathbb{R})}\) (see \cite[Proposition 5.8]{Din05}).
As \(f\) is surjective and hence finite (cf.~e.g.~\cite[Lemma 2.10]{Zhong-Asian}), it follows from \cite{DS07} that we can pull back \(i\partial\overline{\partial}\)-closed currents of bidegree \((k-1,k-1)\) and it is compatible with the cohomology class.
From the projection formula, for any \(\alpha\in H^{k-1,k-1}(X,\mathbb{R})\), we have 
\(f_*f^*\alpha=\deg(f)\cdot\alpha\).
In particular,
\[
f_*|_{H^{k-1,k-1}(X,\mathbb{R})}=\deg(f)\cdot (f^*|_{H^{k-1,k-1}(X,\mathbb{R})})^{-1}.
\]
As \(d_t(f)=\deg(f)>d_{k-1}\), it follows that all of the eigenvalues of \(f_*|_{H^{k-1,k-1}(X,\mathbb{R})}\) are of modulus greater than 1.
By the Poincar\'e duality, we obtain that
\[
f^*|_{H^{1,1}(X,\mathbb{R})}=(f_*|_{H^{k-1,k-1}(X,\mathbb{R})})^{\vee}
\]
and hence all of the eigenvalues of \(f^*|_{H^{1,1}(X,\mathbb{R})}\) are of modulus greater than 1.
\end{proof}

\begin{remark}\label{rmk:singular:dominant-int}
In the above proposition, when \(X\) is a normal compact K\"ahler space with at worst rational singularities, we still have a perfect pairing
\[
N^1(X)\coloneqq H^1(X,\textup{PH}_{X,\mathbb{R}}) \times N_1(X)\to\mathbb{R},
\]
where \(H^1(X,\textup{PH}_{X,\mathbb{R}})\) is defined as in Definition \ref{d:cpt} (see~\cite[Proposition 3.9]{HP16}) and \(N_1(X)\) is the vector space of real closed currents of bidimension \((1,1)\) modulo the equivalence condition: \(T_1\equiv T_2\) if and only if \(T_1(\eta)=T_2(\eta)\) for all real closed \((1,1)\)-forms \(\eta\) with local potentials.
However, we don't know in the singular setting whether there is an effective way to define \(f^*|_{N_1(X)}\) like \cite{DS07}.
\end{remark}

\section{Proof of Theorem \ref{t:main2} and further remarks}\label{s:main2}
In this section, after preparing a comparison proposition between the dynamical degrees of a dominant meromorphic map on the ambient space and those of their restrictions to invariant proper analytic subsets, we prove Theorem \ref{t:main2}. 

First of all, let us recall the following theorem which is proved in \cite{DNT-BLMS}.
We shall reduce the proof of our Theorem \ref{t:main2} to the case of compact K\"ahler manifolds, i.e., Theorem \ref{t:per-kahler} below.

\begin{theorem}[{see \cite[Theorem 1.1]{DNT-BLMS}}]\label{t:per-kahler}
Let $f\colon X\to X$ be a dominant meromorphic map on a compact K\"ahler manifold and $h_a(f)$ its algebraic entropy.
Denote by \(\widetilde \IP_n(f)\) the set of isolated \(f\)-periodic points of period \(n\) counted with multiplicity. 
Then we have
\[
\limsup_{n\to\infty}\frac{1}{n}\textup{log}\,\#\widetilde \IP_n(f)\leq h_a(f).
\]
\end{theorem}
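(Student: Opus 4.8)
The plan is to reinterpret $\#\widetilde \IP_n(f)$ as the mass of a positive measure coming from the intersection of the graph of $f^n$ with the diagonal, to bound that mass by a cohomological quantity using the density theory of positive closed currents, and to control that quantity by the dynamical degrees of $f$. Fix a K\"ahler form $\omega$ on $X$ normalised by $\int_X\omega^k=1$, and endow $X\times X$ with $\omega_{X\times X}\coloneqq\pi_1^*\omega+\pi_2^*\omega$, where $\pi_1,\pi_2\colon X\times X\to X$ are the two projections. By Notation \ref{n:periodic}, $\#\widetilde \IP_n(f)$ equals the number of \emph{isolated} points of $\Gamma_n\cap\Delta$ in $X\times X$, each counted with its (positive, integral) local intersection multiplicity, where $\Gamma_n$ is the $k$-dimensional irreducible closure of the graph of $f^n$ and $\Delta\cong X$ is the diagonal. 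Both $[\Gamma_n]$ and $[\Delta]$ are positive closed $(k,k)$-currents and $\Delta$ has codimension $k$, so we sit in the top range in which the density theory of Sibony and the first author attaches tangent currents to $[\Gamma_n]$ along $\Delta$ and gives a meaning to the intersection $[\Gamma_n]\wedge[\Delta]$ even when it is far from proper.

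The key step, which is the substance of \cite[Theorem 1.1]{DNT-BLMS}, is to bound the isolated part of this intersection. One attaches to $[\Gamma_n]$ along $\Delta$ a (possibly non-unique) tangent current on the normal bundle of $\Delta$ whose total mass is at most $c\,\|[\Gamma_n]\|$ for a constant $c$ that depends only on $(X,\omega)$ and not on $n$, the normalisation involving only the fixed metric near the fixed submanifold $\Delta$. Since local intersection multiplicity is local and, at an isolated (hence proper) intersection point $(a,a)$, is read off from this tangent current, the isolated fixed points contribute a positive atomic part dominated by the tangent current, whence
\[
\#\widetilde \IP_n(f)\ \le\ c\,\|[\Gamma_n]\|.
\]
This is the step I expect to be the main obstacle: the classical Lefschetz fixed point formula is unavailable here because $\Gamma_n\cap\Delta$ may carry positive-dimensional components (the fixed-point set of $f^n$ need not be finite, and the indeterminacy points of $f^n$ lie on $\Gamma_n$), and the alternating cohomological number $\{[\Gamma_n]\}\smile\{[\Delta]\}$ may be much smaller than, or even have the opposite sign to, the positive count of isolated fixed points; it is precisely the positivity and locality built into the density currents that must be exploited, and one has to verify that the mass estimate is uniform in $n$.

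It remains to estimate $\|[\Gamma_n]\|=\langle[\Gamma_n],\omega_{X\times X}^k\rangle$ cohomologically. Expanding $\omega_{X\times X}^k=\sum_{p=0}^k\binom{k}{p}\pi_1^*\omega^p\wedge\pi_2^*\omega^{k-p}$, and using that $\pi_1$ restricts to a bimeromorphic map $\Gamma_n\to X$ and that $(f^n)^*\omega^{k-p}=(\pi_1)_*\bigl(\pi_2^*\omega^{k-p}\wedge[\Gamma_n]\bigr)$ is the pull-back through the graph, one gets
\[
\|[\Gamma_n]\|=\sum_{p=0}^k\binom{k}{p}\int_X\omega^p\wedge(f^n)^*\omega^{k-p}=\sum_{p=0}^k\binom{k}{p}\,\bigl\|(f^n)^*\omega^{k-p}\bigr\|.
\]
By Definition \ref{d:dynamical-degree}, $\|(f^n)^*\omega^{k-p}\|^{1/n}\to d_{k-p}(f)$ as $n\to\infty$, so $\limsup_{n\to\infty}\frac1n\log\|[\Gamma_n]\|\le\max_{0\le q\le k}\log d_q(f)=h_a(f)$. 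Together with the bound $\#\widetilde \IP_n(f)\le c\,\|[\Gamma_n]\|$ this gives $\limsup_{n\to\infty}\frac1n\log\#\widetilde \IP_n(f)\le h_a(f)$, as desired.
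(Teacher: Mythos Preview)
The paper does not prove this statement; it is quoted from \cite[Theorem~1.1]{DNT-BLMS} and used as a black box (see the sentence introducing Theorem~\ref{t:per-kahler}). Your outline is an accurate high-level summary of the argument in \cite{DNT-BLMS}: bound the isolated part of $[\Gamma_n]\cap[\Delta]$ via the Dinh--Sibony theory of tangent/density currents to get $\#\widetilde\IP_n(f)\le c\,\|[\Gamma_n]\|$ with $c$ independent of $n$, then expand $\|[\Gamma_n]\|$ as $\sum_p\binom{k}{p}\|(f^n)^*\omega^{k-p}\|$ and invoke the definition of dynamical degrees. The present paper itself points to this mechanism in the introduction (``uses a very recent theory of density of positive closed currents developed by Sibony and the first author''), so your sketch is consistent with both the cited source and the paper's own description of it.
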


As a consequence of Theorem \ref{t:per-kahler}, we obtain the following lemma which is a weak version of Theorem \ref{t:main2}.
\begin{lemma}\label{l:singular-periodic}
Let $f\colon X\to X$ be a dominant meromorphic map on a Fujiki variety and $h_a(f)$ its algebraic entropy. 
Denote by \(\IP_n^0(X\backslash I_f,f)\) the set of isolated \(f\)-periodic points of period \(n\) whose orbit do not intersect the indeterminacy set \(I_f\). 
Then there is a dense Zariski open subset \(U\subseteq X\) such that 
\[
\limsup_{n\to\infty}\frac{1}{n}\textup{log}\,\#(\IP_n^0(X\backslash I_f,f)\cap U)\leq h_a(f).
\]
\end{lemma}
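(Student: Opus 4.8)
The plan is to reduce Lemma \ref{l:singular-periodic} to the compact K\"ahler case (Theorem \ref{t:per-kahler}) by lifting to a K\"ahler model, the key point being to isolate a Zariski open subset over which the model is an isomorphism so that periodic points upstairs and downstairs are in bijection and multiplicities cause no trouble. Concretely, fix a blowup $\pi\colon\widetilde X\to X$ with $\widetilde X$ a compact K\"ahler manifold and an induced dominant meromorphic self-map $\widetilde f\colon\widetilde X\to\widetilde X$ with $\pi\circ\widetilde f=f\circ\pi$, as furnished by Definition \ref{d:fujiki}. Let $M\subset\widetilde X$ and $N\subset X$ be the nowhere dense analytic subsets from the definition of modification, so that $\pi$ restricts to a biholomorphism $\widetilde X\setminus M\xrightarrow{\ \sim\ }X\setminus N$. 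I would then enlarge $N$ to a proper analytic subset $N'\supseteq N$ that also contains $I_f$, the image $f(I_f)$ (more precisely the analytic set $\pi(\widetilde f(\pi^{-1}(I_f)))$ of points whose fibre data is bad), and all points of $X$ over which $\widetilde f$ or $f$ behaves non-generically; set $U\coloneqq X\setminus N'$, a dense Zariski open subset.

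The heart of the argument is the following comparison. For a point $a\in U$ that is $f$-periodic of period $n$ with orbit $a,f(a),\dots,f^{n-1}(a)$ avoiding $I_f$, I would like to say that its unique lift $\widetilde a\coloneqq\pi^{-1}(a)\in\widetilde X\setminus M$ is $\widetilde f$-periodic of period $n$; this follows from the commutation relation $\pi\circ\widetilde f=f\circ\pi$ together with the fact that, after shrinking $U$ appropriately (removing the finitely many relevant analytic sets, including the pushforward under $\pi$ of $M$ and of the locus where $\widetilde f$ fails to lift $f$ fibrewise), the whole orbit of $\widetilde a$ stays in the locus where $\pi$ is an isomorphism. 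Moreover $\widetilde a$ is an \emph{isolated} $\widetilde f$-periodic point: since $\pi$ is biholomorphic near $\widetilde a$ and maps a neighbourhood of $\widetilde a$ to a neighbourhood of $a$, and $\Gamma_n(\widetilde f)$ near $(\widetilde a,\widetilde a)$ maps isomorphically onto $\Gamma_n(f)$ near $(a,a)$ inside the corresponding product chart, isolatedness of the intersection with the diagonal is preserved. Hence the assignment $a\mapsto\widetilde a$ is an injection of $\IP_n^0(X\setminus I_f,f)\cap U$ into $\widetilde\IP_n(\widetilde f)$ (each lifted point contributing multiplicity at least one), so that
\[
\#\bigl(\IP_n^0(X\setminus I_f,f)\cap U\bigr)\le \#\widetilde\IP_n(\widetilde f).
\]
Applying Theorem \ref{t:per-kahler} to $\widetilde f$ gives $\limsup_{n\to\infty}\frac1n\log\#\widetilde\IP_n(\widetilde f)\le h_a(\widetilde f)$, and by definition (last paragraph of Definition \ref{d:dynamical-degree}) the dynamical degrees, hence the algebraic entropy, are preserved: $h_a(\widetilde f)=h_a(f)$. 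Combining the two inequalities yields the claim.

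The main obstacle I anticipate is \textbf{bookkeeping of the bad loci}: one must choose $N'$ large enough that (i) $\pi$ is an isomorphism over $U$ and its total transform, (ii) the commutation $\pi\circ\widetilde f=f\circ\pi$ genuinely transports orbits and not merely generic points — in particular the orbit of $\widetilde a$ must avoid $M$, which forces removing from $X$ the $\pi$-image of $M$ and of $\widetilde f^{-j}(M)$ for $0\le j<n$; but since we only need a \emph{single} Zariski open $U$ working for all $n$, one cannot naively take a union over $n$. The resolution is that $\widetilde f$ is holomorphic on a Zariski open set and $M$ can be taken $\widetilde f$-invariant in a suitable sense, or more robustly: one removes from $\widetilde X$ the (at most countable, but in fact analytic after taking closure of the grand orbit or using that only finitely many irreducible components matter) locus $\bigcup_j \widetilde f^{-j}(M\cup I_{\widetilde f})$ — here one invokes that a dominant meromorphic map has a well-defined pullback of analytic sets and that, because we restrict to orbits lying in a fixed Zariski open where everything is holomorphic, the relevant union stabilises; alternatively, and most cleanly, one notes that the statement only requires \emph{some} dense Zariski open $U$, so it suffices to take $U\coloneqq X\setminus N'$ where $N'$ is the $\pi$-image of $M$ union $N$ union the indeterminacy-related loci, and then separately argue that removing a lower-dimensional set does not affect the $\limsup$ of the exponential growth rate — which is automatic since we are only \emph{discarding} periodic points. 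I would make this last remark explicit to sidestep the invariance headache entirely: any proper analytic $U$ with $\pi|_{\pi^{-1}(U)}$ an isomorphism onto $U$ works, because $\IP_n^0(X\setminus I_f,f)\cap U$ only shrinks. Verifying carefully that a point of $\IP_n^0(X\setminus I_f,f)\cap U$ whose orbit meets $X\setminus U$ is simply excluded, while those with orbit inside $\pi^{-1}$-isomorphism locus lift faithfully, is the one genuinely delicate point, and I would spell it out with a short neighbourhood-chart argument as above.
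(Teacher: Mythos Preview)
Your approach is exactly the paper's: pass to a K\"ahler model $\pi\colon\widetilde X\to X$, take $U$ to be the Zariski open subset over which $\pi$ is an isomorphism, inject $\IP_n^0(X\setminus I_f,f)\cap U$ into $\widetilde\IP_n(\widetilde f)$, and invoke Theorem~\ref{t:per-kahler} together with $h_a(\widetilde f)=h_a(f)$. Your second paragraph already contains the complete argument and matches the paper's proof.

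The third paragraph, however, manufactures a difficulty that is not there and then resolves it incorrectly. You do \emph{not} need the orbit of $\widetilde a$ to remain inside $\pi^{-1}(U)$. Being an isolated periodic point of period $n$ is, by definition (Notation~\ref{n:periodic}), a local condition on the intersection $\Gamma_n\cap\Delta$ at the single point $(a,a)$; the rest of the orbit is irrelevant. Since $(\pi\times\pi)(\Gamma_n(\widetilde f))=\Gamma_n(f)$ globally (both are irreducible of dimension $k$ and agree on a dense open) and $\pi\times\pi$ restricts to a biholomorphism $\pi^{-1}(U)\times\pi^{-1}(U)\to U\times U$, the germ of $\Gamma_n(\widetilde f)\cap\widetilde\Delta$ at $(\widetilde a,\widetilde a)$ is carried to the germ of $\Gamma_n(f)\cap\Delta$ at $(a,a)$, and isolatedness transfers. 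This is precisely the ``neighbourhood-chart argument'' you already gave; it applies to \emph{every} $a\in\IP_n^0(X\setminus I_f,f)\cap U$, with no hypothesis on where $f(a),\dots,f^{n-1}(a)$ lie.

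Consequently your final sentence---that points of $\IP_n^0(X\setminus I_f,f)\cap U$ whose orbit meets $X\setminus U$ are ``simply excluded''---is wrong on both counts: such points are counted on the left-hand side (the set is $\IP_n^0(X\setminus I_f,f)\cap U$, not $\IP_n^0(U,f)$), and they lift to isolated $\widetilde f$-periodic points just like the others. Delete the obstacle paragraph and the proof is clean.
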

\begin{proof}
Let \(\pi\colon\widetilde{X}\to X\) be a blowup such that \(\widetilde{X}\) is a compact K\"ahler manifold and let $\widetilde f$ be the dominant meromorphic map on \(\widetilde X\) induced by \(f\). 
Then there is a dense Zariski open subset \(U\subseteq X\) such that \(\pi^{-1}(U)\) is biholomorphic to \(U\).
Hence, the lemma follows from the following inequality
\[
\#(\IP_n^0(X\backslash I_f,f)\cap U)\leq \# \widetilde \IP_n(\widetilde{f})
\]
and Theorem \ref{t:per-kahler} by noting that \(h_a(f)=h_a(\widetilde{f})\).
\end{proof}

The following comparison proposition is known for a holomorphic surjective self-map of a compact K\"ahler manifold (see \cite[Appendix A, Proposition A.11]{NZ09}). 
We extend it to the case of a meromorphic map of a Fujiki variety and this will be crucially used in the proofs of our main results.
Note that it also has its own independent interests (cf.~Remark \ref{r:no product formula}). 
\begin{proposition}\label{p:small-dyn}
Let \(f\colon X\to X\) be a dominant meromorphic map of a Fujiki variety \(X\) of dimension \(k\), and \(Z\) a proper \(f\)-invariant irreducible analytic subset which is not contained in the indeterminacy set \(I\) of \(f\), i.e., the closure of \(f(Z\setminus I)\) is equal to \(Z\). 
Then the \(p\)-th dynamical degree of $f_{|Z}$ is no more than $d_p(f)$. 
In particular, the algebraic entropy \(h_a (f_{|Z})\) is no more than \(h_a(f)\).
Moreover, if \(f\) has a dominant topological degree, then
all dynamical degrees of $f_{|Z}$ are no more than $d_{k-1}(f)$.
\end{proposition}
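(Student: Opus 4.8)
The plan is to reduce everything to the compact Kähler manifold case, where the analogous statement is essentially known, by choosing a good bimeromorphic model. First I would take a blowup $\pi\colon\widetilde X\to X$ with $\widetilde X$ a compact Kähler manifold and the induced dominant meromorphic map $\widetilde f$ on $\widetilde X$; by definition $d_p(f)=d_p(\widetilde f)$. The key point is that I want the strict transform $\widetilde Z$ of $Z$ to be $\widetilde f$-invariant with $\widetilde f_{|\widetilde Z}$ bimeromorphically conjugate to $f_{|Z}$, so that $d_p(f_{|Z})=d_p(\widetilde f_{|\widetilde Z})$ as well (dynamical degrees being bimeromorphic invariants). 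Since $Z\not\subseteq I$, the induced meromorphic map $\widetilde f_{|\widetilde Z}\colon\widetilde Z\dashrightarrow\widetilde Z$ is again dominant, and after a further resolution of singularities of $\widetilde Z$ inside a blowup of $\widetilde X$ (using Hironaka and the Chow-lemma diagram recalled in the excerpt) I may assume $\widetilde Z$ is smooth, at the cost only of replacing $\widetilde X$ by another Kähler model. So it suffices to prove: if $g\colon M\to M$ is a dominant meromorphic self-map of a compact Kähler manifold and $W\subseteq M$ is a smooth $g$-invariant submanifold with $g_{|W}$ dominant, then $d_p(g_{|W})\le d_p(g)$, and if $g$ has dominant topological degree then all $d_p(g_{|W})\le d_{k-1}(g)$.

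For the first inequality I would argue cohomologically. Fix a Kähler form $\omega$ on $M$ and let $\iota\colon W\hookrightarrow M$ be the inclusion. The dynamical degree $d_p(g_{|W})$ is computed from the growth of $\|(g^n)^*_{|W}(\iota^*\omega)^p\|$. The mechanism is that $\iota^*$ intertwines the action of $g$ on $M$ with that of $g_{|W}$ on $W$ away from indeterminacy loci: one has $(g_{|W}^n)^*\iota^*\omega^p=\iota^*(g^n)^*\omega^p$ as currents on $W$ outside a proper analytic subset, and pulling back a positive closed current under the inclusion of a submanifold (i.e.\ slicing) does not increase mass in a way that beats the ambient growth rate. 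More precisely, the comparison $\|(g_{|W}^n)^*(\iota^*\omega)^p\|_W\le C\,\|(g^n)^*\omega^p\|_M$ for a constant $C$ independent of $n$ follows by writing $\|(g^n)^*\omega^p\|_M=\langle (g^n)^*\omega^p,\omega^{k-p}\rangle$, bounding $\omega^{k-p}$ below by a positive multiple of a smooth closed form that restricts to a Kähler power on $W$ times a transverse bump, and using that $(g^n)^*\omega^p$ is a positive closed current. Taking $n$-th roots gives $d_p(g_{|W})\le d_p(g)$; maximizing over $p$ gives $h_a(f_{|Z})\le h_a(f)$.

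For the last statement, assume $f$ — equivalently $g$ — has dominant topological degree, so by log-concavity $d_t(g)=d_k(g)>d_{k-1}(g)\ge\cdots\ge d_0(g)$. I want $d_p(g_{|W})\le d_{k-1}(g)$ for all $p$, including $p=\dim W$. For $p\le k-1$ the previous paragraph already gives $d_p(g_{|W})\le d_p(g)\le d_{k-1}(g)$ by log-concavity of the ambient degrees, so the only case to examine is the top degree $p=\dim W=:\ell$, i.e.\ the topological degree of $g_{|W}$. Here the point is that a generic fiber of $g_{|W}$ is contained in a generic fiber of $g$, but $g$ has dominant topological degree so its own fibers are finite of cardinality $d_t(g)$; this does not immediately bound the topological degree of $g_{|W}$ since $g_{|W}$ could "spread out" inside $W$. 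Instead I would use the standard characterization $d_\ell(g_{|W})=d_{\ell-1}(g_{|W})$ forces nothing, and rather argue via the inequality $d_\ell(g_{|W})\le d_{\ell-1}(g_{|W})\cdot(\text{something})$ is not available; the clean route is: by the mixed Hodge–Riemann log-concavity applied to $g_{|W}$ we have $d_\ell(g_{|W})\le d_{\ell-1}(g_{|W})^2/d_{\ell-2}(g_{|W})$ — not directly useful — so instead I bound $d_\ell(g_{|W})$ by $d_{\ell-1}(g_{|W})\cdot\lambda$ where $\lambda$ is the degree-increasing factor... The honest main obstacle is exactly this: showing the topological degree of $f_{|Z}$ is at most $d_{k-1}(f)$ rather than just at most $d_k(f)$. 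I expect this to follow from the observation that if $d_t(f_{|Z})$ were $\ge d_{k-1}(f)$, then since $d_t(f_{|Z})\le d_t(f)$ one could compare the pullback of a top-form on $Z$ (thickened to a $(p,p)$-current on $X$ by multiplying with $\omega^{k-\ell}$ supported near $Z$) with $(f^n)^*\omega^{k-1}$ on $X$, whose growth is $d_{k-1}(f)^n$; the $(k-1)$-dynamical degree of the ambient map controls the $(\ell,\ell)$-mass concentrated on the $\ell$-dimensional $Z$ because $\ell\le k-1$ (as $Z$ is a proper subvariety). This reduction of the top-degree case to degree $p=k-1$ of the ambient map — using $\dim Z<\dim X$ and positivity — is the crux, and I would model it closely on the proof of \cite[Proposition A.11]{NZ09}, adapting it from the holomorphic to the meromorphic/Fujiki setting using the nice-current pullback machinery and Lemma \ref{l:nice-pullback}.
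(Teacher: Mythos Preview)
Your handling of the ``moreover'' clause is confused. Since $Z$ is a \emph{proper} subvariety, $\ell:=\dim Z\le k-1$, so once you have $d_p(f_{|Z})\le d_p(f)$ for all $0\le p\le\ell$, the dominant-topological-degree hypothesis plus log-concavity give $d_p(f)<d_{p+1}(f)$ for every $p$, hence $d_p(f)\le d_{k-1}(f)$ for $p\le k-1$, and you are done: $d_\ell(f_{|Z})\le d_\ell(f)\le d_{k-1}(f)$. There is no separate ``top degree'' obstacle to overcome --- your entire final paragraph is chasing a difficulty that does not exist, and the paper dispatches this case in one sentence.

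The more serious gap is in your reduction step. You assume you can choose a K\"ahler model $\pi\colon\widetilde X\to X$ in which the strict transform $\widetilde Z$ is bimeromorphic to $Z$ and $\widetilde f$-invariant; but $Z$ may lie entirely inside the locus that any K\"ahler resolution must modify (for instance $Z$ could be a component of $X_{\sing}$), and then no component of $\pi^{-1}(Z)$ is bimeromorphic to $Z$ --- they all have strictly larger dimension. The paper confronts exactly this: it takes a log resolution making $\Pi^{-1}(Z)$ a simple normal crossing divisor, fixes a hypersurface component $\widetilde Z=\widetilde Z_0$ dominating $Z$ (so $\dim\widetilde Z=k-1$, not $\dim Z$), exploits that a hypersurface is never contained in the indeterminacy locus of $\widetilde f^n$, tracks that $\widetilde f^n$ may send $\widetilde Z$ into a different component $\widetilde Z_{i_n}$, and compares $\int_{Z'}(g^n)^*\alpha^p\wedge\alpha^{l-p}$ with $\int_{\widetilde Z}^\bullet(\widetilde f^n)^*\widetilde\omega^p\wedge\widetilde\omega^{k-p-1}$ via the dominant maps $\tau_n\colon\widetilde Z_{i_n}\to Z'$ and the nice-current pullback of Lemma~\ref{l:nice-pullback}. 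Your slicing heuristic (``$\omega^{k-p}$ bounded below by a transverse bump'') is not a valid substitute for this: the current $(g^n)^*\omega^p$ is singular along the indeterminacy set, its restriction to a submanifold is not a priori defined or controlled, and the paper closes the estimate only by invoking the regularization theorem of \cite{DS-ens} (smooth approximants with uniformly bounded cohomology class, converging locally uniformly off indeterminacy) so that the final bound becomes a cohomological pairing on $\widetilde X$.
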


\begin{proof}
Note that the last conclusion is due to the log-concavity of the dynamical degrees (see Definition \ref{d:dynamical-degree}).  
Consequently, we have 
\[d_k(f)>d_{k-1}(f)>\cdots>d_0(f)\]  by the further assumption of the dominant topological degree of \(f\), and thus we conclude by noting that \(\dim(Z)\leq k-1\).

In the following, we shall verify that \(d_p(f_{|Z})\leq d_p(f)\) for each \(0\leq p\leq \dim(Z)\).
Let \(\pi\colon Z'\to Z\) be a holomorphic and bimeromorphic map from a compact K\"ahler manifold, noting that the analytic subvariety \(Z\) is also in the Fujiki class \(\mathcal{C}\) (see \cite[Lemma 4.6]{Fujiki-prims}). 
Then the algebraic entropy of \(f_{|Z}\) is equal to the one of the composite map
\[g\coloneqq\pi^{-1}\circ f_{|Z}\circ \pi\colon Z'\to Z'.\]

Let $\Pi\colon \widetilde X \to X$ be a bimeromoprhic and holomorphic map from a compact K\"ahler manifold \(\widetilde X\).
Note that $\Pi^{-1}(Z)$ may have some component of dimension larger than $\dim (Z)$. 
Note also that the induced dominant meromorphic map 
\[\widetilde f\coloneqq\Pi^{-1}\circ f\circ \Pi\colon \widetilde X\to\widetilde X
\] may send a certain component of $\Pi^{-1}(Z)$ to an analytic subset of smaller dimension; see also Remark \ref{r:no product formula}.  
With \(\widetilde{X}\) further replaced by a log resolution with respect to the inverse image of the ideal sheaf \(\Pi^*\mathcal{I}_Z\) due to Hironaka  (cf.~\cite{Wlo09} and \cite[Corollary 2]{Hir75}), 
we may assume the following:
\begin{enumerate}
\item The model \(\widetilde{X}\) is a compact K\"ahler manifold.
\item The support of the inverse image \(\text{Supp}\,\Pi^{-1}(Z)\) is a simple normal crossing divisor, and hence every irreducible component is a smooth hypersurface. 
\item The map \(f\) lifts to a dominant meromorphic map \(\widetilde{f}\) on \(\widetilde{X}\).
\end{enumerate}
Let \(\widetilde Z_0,\cdots,\widetilde Z_m\) be all of the irreducible components of \(\Pi^{-1}(Z)\), any of which dominates \(Z\). 
Let us fix a component $\widetilde Z\coloneqq\widetilde Z_0$ (such that $\Pi(\widetilde Z)=Z$).   
Since $\widetilde Z$ is an hypersurface, it is not contained in the indeterminacy set \(\widetilde I_n\) of $\widetilde f^n$ for every \(n\) 
and then we have 
\[\widetilde f^n(\widetilde Z\setminus \widetilde I_n)\subseteq \widetilde Z_{i_n}\]
where \(0\leq i_n\leq m\) for each \(n\).
Here, we need that \((f^n)_{|Z}\) is dominant for each \(n\geq 1\), though \((\widetilde f^n)|_{\widetilde Z\setminus \widetilde I_n}\) is not necessarily dominant over \(\widetilde Z_{i_n}\). 
Note also that  
the choice of \(i_n\) is not necessarily unique. 
Then the surjective map $\Pi$ induces the following dominant meromorphic maps  
\[
\tau_n\coloneqq\pi^{-1}\circ\Pi_{|\widetilde Z_{i_n}}\colon\widetilde Z_{i_n}\to Z'
\]
such that \(g^n\circ\tau_0=\tau_n\circ (\widetilde f^n)_{|\widetilde Z}\). 

We aim to make each \(\tau_n\) holomorphic. 
Replacing $\widetilde X$ with a suitable blowup with smooth center in \(\widetilde Z_0\), we may assume that $\tau_0$ is holomorphic (cf.~\cite[Corollary 2 and Definition 4.1]{Hir75}). 
In fact, the indeterminacy set of \(\tau_0\colon\widetilde Z_0\to Z'\) is contained in the inverse image \((\Pi_{|\widetilde Z_0})^{-1}(I_{\pi^{-1}})\), where \(I_{\pi^{-1}}\) is a proper analytic subset consisting of the singular locus and the ``non-K\"ahler'' locus of \(Z\), over which \(\pi\) is not isomorphic.  
Besides, a resolution \(\overline{\Gamma_0}\) of singularities of the  graph \(\Gamma_0\) of \(\tau_0\) only has positive dimensional fibre over \((\Pi_{|\widetilde Z_0})^{-1}(I_{\pi^{-1}})\). 
Applying Hironaka Chow lemma for an open neighborhood of \((\Pi_{|\widetilde Z_0})^{-1}(I_{\pi^{-1}})\) (see \cite[Corollary 2]{Hir75}), we obtain that there is a blowup along smooth centers of \(\widetilde{Z_0}\) (contained in the indeterminacy set in \((\Pi_{|\widetilde Z_0})^{-1}(I_{\pi^{-1}})\)) which dominates \(\overline{\Gamma_0}\).
From this construction (see~\cite[pp. 137-138]{Moi74} and \cite[(4.2.1)]{Hir75}), the blowup does not create any new component of \(\Pi^{-1}(Z)\) dominating \(Z'\); for each \(0\leq i\leq m\), we can identify \(\widetilde{Z}_i\) with its strict transform.  
Inductively, after finitely many blowups along smooth centers in \(\Pi^{-1}(Z)\), we may assume that each \(\tau_n\) is holomorphic. 
For the convenience of the reader, we summarize the above construction via the following commutative diagram.
\[
\xymatrix{
\widetilde{Z}\ar@/^2pc/[rrr]^{(\widetilde f^n)_{|\widetilde Z}}\ar@{^(->}[r]\ar[dd]_{\tau_0}&\widetilde{X}\ar[r]^{\widetilde f^n}\ar[d]_{\Pi}&\widetilde{X}\ar[d]^{\Pi}&\widetilde Z_n\ar[dd]^{\tau_n}\ar@{_(->}[l]\\
&X\ar[r]^{f^n}&X&\\
Z'\ar@/_2pc/[rrr]_{g^n}\ar[r]^{\pi}&Z\ar[r]^{(f^n)_{|Z}}\ar@{^(->}[u]&Z\ar@{^(->}[u]&Z'\ar[l]_{\pi}
}
\]

Denote by $k$ the dimension of $X$ and $l$ the one of $Z$. 
The dimension of $\widetilde Z$ is then $k-1$.  
Fix a K\"ahler form \(\alpha\) on \(Z'\) and a K\"ahler form $\widetilde\omega$ on $\widetilde{X}$. 
Then $(\tau_0)_*(\widetilde\omega_{|\widetilde Z})^{k-1-l}$ (which is the integration along the fibres of the form \((\widetilde\omega_{|\widetilde Z})^{k-1-l}\)) is a positive closed $(0,0)$-current on $Z'$ and hence is given by a positive constant function. 
Multiplying $\widetilde\omega$ by a constant, we may  assume that $(\tau_0)_*(\widetilde\omega_{|\widetilde Z})^{k-1-l}=[Z']$.

Since $g^n\circ \tau_0 = \tau_n\circ (\widetilde f^n)_{|\widetilde Z}$ for each \(n\) and any smooth \((p,p)\)-form on \(\widetilde Z_{i_n}\) is bounded by a constant times \((\widetilde \omega^p)_{|\widetilde Z_{i_n}}\), applying Lemma \ref{l:nice-pullback} and Remark \ref{r:nice-pullback-Fujiki} and noting that \(\tau_n\) is dominant, we obtain the following
\[\tau_0^\bullet(g^n)^*(\alpha^p)=[(\widetilde f^n)_{|\widetilde Z}]^\bullet \tau_n^*(\alpha^p)\lesssim [(\widetilde f^n)_{|\widetilde Z}]^\bullet((\widetilde \omega^p)_{|\widetilde Z_{i_n}}).
\]
As there are only finitely many choices of \(i_n\), we can take a uniform constant \(C\) which is independent of \(n\) such that for any \(n\), 
\[
\tau_n^*(\alpha^p)\leq C\cdot (\widetilde \omega^p)_{|\widetilde Z_{i_n}}.
\]
Fix an integer \(0\leq p\leq l\). 
Then we have
\begin{align*}
\int_{Z'} (g^n)^*(\alpha^p)\wedge \alpha^{l-p}&=
\int_{\widetilde Z}^\bullet \tau_0^\bullet(g^n)^*(\alpha^p)\wedge \tau_0^*(\alpha^{l-p})\wedge (\widetilde\omega^{k-l-1})_{|\widetilde Z}\\
&=\int_{\widetilde Z}^\bullet [(\widetilde f^n)_{|\widetilde Z}]^\bullet\tau_n^*(\alpha^p) \wedge \tau_0^*(\alpha^{l-p})\wedge (\widetilde\omega^{k-l-1})_{|\widetilde Z}\\
&\lesssim \int_{\widetilde Z}^\bullet [(\widetilde f^n)_{|\widetilde Z}]^\bullet((\widetilde \omega^p)_{|\widetilde Z_{i_n}}) \wedge  (\widetilde\omega^{k-p-1})_{|\widetilde Z}
= \int_{\widetilde Z}^\bullet (\widetilde f^n)^*(\widetilde\omega^p) \wedge  \widetilde\omega^{k-p-1},
\end{align*}
where the last equality is due to Lemma \ref{l:nice-pullback}.
By \cite[Corollary 1.2 and Proposition 4.6]{DS-ens}, for each \(n\), there are smooth currents \(\{T_{n,s}\}_s\) converging to a current \(T_n\geq (\widetilde f^n)^*(\widetilde \omega^p)\) and the convergence is locally uniform outside the indeterminacy set \(\widetilde I_n\).
Moreover, the mass of \(T_{n,s}\) is bounded by a constant times the one of \((\widetilde f^n)^*(\widetilde \omega^p)\) and the constant is independent of \(n\). 
It then follows that 
\[
\int_{\widetilde Z}^\bullet (\widetilde f^n)^*(\widetilde\omega^p) \wedge  \widetilde\omega^{k-p-1}\leq\limsup_{s\to\infty}\int_{\widetilde Z} T_{n,s}\wedge \widetilde\omega^{k-p-1}.
\]
As the last integral depends only on the cohomology classes of \(T_{n,s}\) (which are uniformly bounded by a multiple of \((\widetilde f^n)^*(\widetilde\omega^p)\) and the multiple is independent of \(s\) and \(n\)), we have the following inequality
\[
\int_{Z'} (g^n)^*(\alpha^p)\wedge \alpha^{l-p}\lesssim \int_{\widetilde Z} (\widetilde f^n)^*(\widetilde\omega^p)\wedge \widetilde\omega^{k-p-1}\lesssim \|(\widetilde f^n)^*(\widetilde \omega^p)\|.
\]
By taking the limit of the \(n\)-th root, 
we deduce that the dynamical degree of order $p$ of $f_{|Z}$ is no more than the one of $\widetilde f$ which is equal to $d_p(f)$. The lemma then follows.
\end{proof}

\begin{remark}\label{r:no product formula}
When proving Proposition \ref{p:small-dyn}, as the induced map $\widetilde f\coloneqq\Pi^{-1}\circ f\circ \Pi$ may send certain component of $\Pi^{-1}(Z)$ to an analytic subset of smaller dimension, even if \(\widetilde f\) fixes some component \(\widetilde Z\) of \(\Pi^{-1}(Z)\), one could not simply apply the product formula (see \cite[Theorem 1.1]{DN11} and \cite[Theorem 1.1]{DNT12}) to conclude that \(h_a(f_{|Z})\leq h_a(\widetilde{f}_{|\widetilde{Z}})\). 
\end{remark}

\begin{proof}[Proof of Theorem \ref{t:main2}]
Denote by \(P_n\coloneqq\IP_n^0(X\setminus I_f,f)\) the set of all isolated \(f\)-periodic points of period \(n\) whose orbits do not intersect \(I_f\). 
Suppose to the contrary that there exists a subsequence \(\{n_i\}\) and a number \(\lambda>1\) such that for all \(i\in\mathbb{N}\), we have
\begin{align}\label{eq-geq1}
\# P_{n_i}\geq \exp(n_i\lambda h_a(f)).
\end{align}
By Lemma \ref{l:singular-periodic}, 
there is a dense Zariski open set \(U\subseteq X\) such that 
\[
\limsup_{n\to\infty}\frac{1}{n}\text{log}\,\#(P_n\cap U)\leq h_a(f).
\]
Therefore, there exists some \(\mu<\lambda\) such that for all sufficiently large \(n\), we have
\[
\#(P_n\cap U)\leq \exp(n\mu h_a(f)).
\]
This implies that there exists some positive integer \(\delta<1\) such that 
\begin{align}\label{eq-geq2}
\#(P_{n_i}\cap Z)\geq\delta\# P_{n_i}
\end{align}
for all sufficiently large \(n_i\), where \(Z\coloneqq X\setminus U\) is a proper analytic subset (for this purpose, we can take \(\delta=1/2\)). 
With \(Z\) replaced by a proper closed analytic subset, \(\{n_i\}\) replaced by a subsequence and \(\delta\) replaced by another positive number, we may assume that \(Z\) has the minimal dimension, 
in other words, we find a proper analytic subset \(Z\) of minimal dimension such that 
\[\# (P_{n_i}\cap Z)\geq \delta\# P_{n_i}.\]

Since \(P_n\) is invariant by $f$ and \(f\) is bijective from \(P_n\) to \(P_n\), we see that $\# (P_{n_i}\cap f^s(Z))\geq \delta (\# P_{n_i})$ for every $i$ and every \(s\). 
Here, we define \(f^s(Z)\) as the closure \(\overline{f^s(Z\setminus I_{f^s})}\) where \(I_{f^s}\) is the indeterminacy set of \(f^s\). 
Fix an integer \(N\) such that \(N\cdot \delta>2\).

We claim that \(f^s(Z)=f^t(Z)\) for some \(1\leq s<t\leq N\). 
Suppose to the contrary.
We then have \(\dim(f^s(Z)\cap f^t(Z))<\dim(Z)\) for any \(1\leq s<t\leq N\). 
We choose the positive number \(\varepsilon=\delta/(N-1)\). 
By the minimality of \(\dim(Z)\), for any sufficiently large \(i\) and for any \(1\leq s<t\leq N\), we have 
\(\#(P_{n_i}\cap f^s(Z)\cap f^t(Z))<\varepsilon(\# P_{n_i})\).
But then the following inequality for any sufficiently large \(i\) due to the inclusion-exclusion principle would give rise to a contradiction 
\[
\# P_{n_i}\ge (N\delta)(\# P_{n_i})-\frac{N(N-1)}{2}\cdot\varepsilon\cdot\# P_{n_i}>\# P_{n_i}.
\]
Hence, we have \(f^s(Z)=f^t(Z)\) for some \(s<t\) and thus \(Z\) is \(f\)-preperiodic. 
With \(Z\) replaced by \(f^{t-s}(Z)\), we may assume that \(Z\) is \(f\)-periodic. 
Here, a proper analytic subset \(Z\) is said to be \textit{\(f\)-periodic of period \(m\)}, if the closure \(\overline{f^m(Z\setminus I_{f^m})}\) coincides with \(Z\). 
We used the fact that $f^n$ doesn't contract $Z$ to a lower dimensional set as \(\dim (Z)\) is minimal.

Let \(m\) be the minimal period of \(Z\).
We claim that for any sufficiently large \(i\gg 1\), the number $n_i$ is divisible by $m$.
Suppose to the contrary.
Then replacing \(\{n_i\}\) by a subsequence, we may assume that the remainder \(c_{n_i}\equiv n_{i}\mod m\) is non-zero for any \(i\). 
As there are only finitely many choices of the remainders, replacing \(\{n_{i}\}\)  by a subsequence, we may further assume that all \(c_{n_i}\) are equal to a constant \(c_0\) with \(0<c_0<m\). 
But then, using that the points in \(P_n\) are fixed by \(f^n\), we obtain that \(P_n\cap Z\subseteq P_n\cap f^n(Z)\).
Hence,
\[
\# (P_{n_{i}}\cap Z\cap f^{c_0}(Z))=\# (P_{n_{i}}\cap Z\cap f^{n_{i}}(Z))=\# (P_{n_{i}}\cap Z)\ge \delta(\# P_{n_{i}}),
\]
a contradiction to the minimality of \(\dim(Z)\). 
So we have \(n_i=n_i'\cdot m\) for all sufficiently large \(i\) and with 
$f$ replaced by $f^m$ we may assume that $Z$ is \(f\)-invariant. 

By the choice of \(Z\),  for any proper analytic subset \(Z_0\subseteq Z\), we have $\# (P_{n_i}\cap Z_0)=o(\# P_{n_i})$ for all the sufficiently large $i$. 
Hence, together with Lemma \ref{l:singular-periodic} (applied for \(f_{|Z}\)) and Proposition \ref{p:small-dyn}, we obtain that 
\[
\limsup_{i\to\infty}\frac{1}{n_i}\text{log}\,\#(P_{n_i}\cap Z)\leq h_a(f_{|Z})\leq h_a(f).
\]
This leads to a contradiction to our assumption (\ref{eq-geq1}) by using (\ref{eq-geq2}) and we finish the proof of Theorem \ref{t:main2}.
\end{proof}

We end the section with the remarks below on special cases related to Theorem \ref{t:main2}.
\begin{remark}[Projective varieties over an algebraically closed field]\label{rem:other-alg-closed}
When \(f\colon X\to X\) is a dominant rational map of a projective variety of dimension \(k\) with only isolated periodic points, we can also obtain Theorem \ref{t:main2} since a sufficiently large multiple of the intersection cycle of ample divisors is greater than the cycle of the graph in \(X\times X\). 
This different approach works in any algebraically closed field. 
For the convenience of the reader, we give a sketch proof herein. 
Note that this approach does not work in our non-projective case (cf.~\cite[Remark 3.2]{DNT-BLMS}, \cite{Tru15} and \cite[Section 5]{Xie23}).

Assume that \(d_s(f)\) is the maximal dynamical degree. 
Denote by \(\Delta\) the diagonal of \(X\times X\) and \(\Gamma_n\) the graph of \(f^n\). 
Let \(\pi_i\colon X\times X\to X\) be the natural projections.
Take an ample divisor \(H\) on \(X\) and let \(D=\pi_1^*H+\pi_2^*H\). 
Then \(D\) is ample on \(X\times X\); consequently, there exists some sufficiently large \(a>0\) such that \(D^a\otimes \mathcal{I}_{\Delta}\) is globally generated, where \(\mathcal{I}_{\Delta}\) is the ideal sheaf of the diagonal. 
Therefore, we have \(\#\IP_n^0(X,f)\leq \#(\Gamma_n\cap\Delta)\leq a^k\Gamma_n\cdot D^{k}\) where the last inequality is due to the choice of \(a\) (cf.~\cite[Lemma 5.10]{Xie23}). 
Then
\begin{align*}
\Gamma_n\cdot D^k=\sum_{i=0}^k\binom{k}{i}(\Gamma_n\cdot \pi_1^*H^i\cdot \pi_2^*H^{k-i})=\sum_{i=0}^k\binom{k}{i}(H^i\cdot (f^n)^*H^{k-i}).
\end{align*}
By our assumption, for any \(\varepsilon>0\), there exists a constant \(c>0\) such that for any \(n\) and any \(0\leq i\leq k\), we have
\[
(f^n)^*H^{k-i}\cdot H^i\leq c(d_s(f)+\varepsilon)^n.
\]
Therefore, we obtain that
\[
\# \IP_n^0(X,f)\leq c'\cdot (d_s(f)+\varepsilon)^n,
\]
where \(c'=a^k\cdot c\cdot 2^k\), and then we conclude our result.
\end{remark}

\begin{remark}[Bimeromorphic maps of Fujiki surfaces]\label{rem-fujiki-surfaces}
In the joint paper \cite{DNT-BLMS} of the first author, there is also an estimate of isolated periodic points for a bimeromorphic self-map \(f\colon X\to X\) on a (smooth) compact K\"ahler surface \(X\) which is algebraically stable (in the sense of Forn{\ae}ss-Sibony), that is, \((f^n)^*=(f^*)^n\) as linear maps on \(H^{1,1}(X,\mathbb{C})\) for all \(n\in \mathbb{N}\); under this assumption, \cite[Theorem 1.4]{DNT-BLMS} asserts that if  \(d_1(f)>1\), then the number of isolated \(f\)-periodic points of period \(n\) counted with multiplicities satisfies 
\[
\#\widetilde{\IP}_n(f)\leq d_1(f)^n(1+o(1)),
\]as \(n\) tends to infinity. 
One can follow the same strategy as in the proof of Theorem \ref{t:main2} to show the following statement.

\((*)\) Let \(g\colon S\to S\) be a bimeromorphic map of a compact complex surface in the Fujiki class \(\mathcal{C}\).
Suppose that \(d_1(g)>1\).
Denote by \(P_n\coloneqq\IP_n^0(S\setminus I_g,g)\) the set of isolated \(g\)-periodic points of period \(n\) whose orbits do not intersect \(I_g\).
Then we have the cardinality 
\[\#P_n\leq d_1(g)^n(1+o(1)).\]
In fact, applying \cite[Theorem 0.1]{DF01}, we can take a further modification \(\widetilde{S}\) of a smooth K\"ahler model of \(S\) to obtain the commutative diagram
\[
\xymatrix{
\widetilde{S}\ar[r]^f\ar[d]_\pi&\widetilde{S}\ar[d]^\pi\\
S\ar[r]_g&S
}
\]
where \(f\) is an algebraically stable bimeromorphic map on a (smooth) compact K\"ahler surface \(\widetilde{S}\), and \(\pi\) is an isomorphism over a dense Zariski open subset \(U\).
By \cite[Theorem 1.4]{DNT-BLMS}, it is clear that \(\#(P_n\cap U)\leq d_1(g)^n(1+o(1))\) as \(n\) tends to infinity. 
For the complement \(Z\) of \(U\), which is a Zariski closed subset of \(S\), 
suppose to the contrary that \(\#(P_{n_i}\cap Z)\geq \delta d_1(g)^{n_i}\) for some positive number \(\delta>0\) and for a subsequence \(\{n_i\}\).
Then with \(Z\) replaced by an irreducible component (which is a curve), we may assume that \(\#(P_{n_i}\cap Z)\geq \delta d_1(g)^{n_i}\) and hence \(Z\not\subseteq I_g\).
By playing the same trick as in the proof of Theorem \ref{t:main2}, we may assume that \(Z\) is \(g\)-invariant.
But then the bimeromorphic map \(g\) would induce an automorphism on the curve \(Z\) with \(d_1(g_{|Z})=1\). 
This leads to a contradiction to Theorem \ref{t:main2} for \(g_{|Z}\) as \(\#(P_{n_i}\cap Z)\geq \delta d_1(g)^{n_i}\) by assumption.
So \((*)\) is thus verified.
\end{remark}

\section{Proofs of Theorem \ref{t:main1} and Corollary \ref{c:swz}}\label{s:proofs of conj}
In this section, we will prove Theorem \ref{t:main1} and Corollary \ref{c:swz}. 
In Subsection \ref{sub:lower bound}, we prove an equidistribution result for meromorphic maps with a dominant topological degree so as to establish the lower bound, and in Subsection \ref{sub:upper bound}, we follow a similar strategy of the proof of Theorem \ref{t:main2} to analyze the intersection of periodic points with any proper analytic subset so as to establish the upper bound.
Finally, the complete proofs of Theorem \ref{t:main1} and Corollary \ref{c:swz} will be given in Subsection \ref{sub:proof of mainthm}.

\subsection{Lower bound}\label{sub:lower bound}
In this subsection, we establish the lower bound in Theorem \ref{t:main1}.
As we are working for lower bound, we can remove a  suitable proper analytic subset so that we reduce to the equidistribution of repelling points with the orbits contained in an open subset of its compact K\"ahler model. 

\begin{theorem}\label{t:equi-mero}
Let \(f\colon X\to X\) be a dominant meromorphic map of a compact K\"ahler manifold \(X\), and $Y$ a proper analytic subset of $X$.
Let $P_n$ be one of the sets (cf.~Notation \ref{n:periodic}) $\IP_n$, $\widetilde \IP_n$, $\RP_n(X\setminus Y)$, and $\RP^0_n(X\setminus Y)$. 
Suppose that \(f\) has a dominant topological degree \(d_t\). 
Then we have  
$\# P_n= d_t^n(1+o(1))$ when $n$ goes to infinity. Moreover, the points in $P_n$ are equidistributed with respect to the equilibrium measure $\mu$ of \(f\), i.e.,
$$\lim_{n\to\infty} \frac{1}{d_t^n} \sum_{a\in P_n} \delta_a = \mu,$$
where $\delta_a$ denotes the Dirac mass at $a$.
\end{theorem}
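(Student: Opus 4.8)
The plan is to follow the inverse-branch construction of \cite[Section~3]{DNT-ind}, adding to it the requirement that the tubes built over a small generic ball stay away from $Y$. I would first recall the properties of the equilibrium measure $\mu$ of $f$ that will be used: $\mu$ charges no proper analytic subset; $\mu$ is \emph{moderate}, so that $\mu\big(\{z:\dist(z,Z)<\rho\}\big)=O(\rho^{\alpha})$ for some $\alpha>0$ and every proper analytic subset $Z$ (see \cite{DS06}); the backward orbits equidistribute, $\tfrac1{d_t^n}\sum_{p\in f^{-n}(x_0)}\delta_p\to\mu$ for every $x_0$ outside an exceptional pluripolar set (\cite{DS06,DNT-ind}); and, since the topological degree is dominant, $\mu$ has strictly positive Lyapunov exponents, bounded below by $\tfrac12\log\big(d_t/d_{k-1}(f)\big)>0$ (cf.~\cite{DNT-ind}). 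I would also invoke the upper bound $\#\widetilde\IP_n(f)\le d_t^n(1+o(1))$ from \cite{DNT-ind}. Since $\RP^0_n(X\setminus Y)\subseteq\RP_n(X\setminus Y)\subseteq\IP_n\subseteq\widetilde\IP_n$, with repelling points counted with multiplicity $1$, it is enough to produce, for every $\varepsilon>0$ and all large $n$, a subset $S_n\subseteq\RP^0_n(X\setminus Y)$ with $\#S_n\ge(1-\varepsilon)d_t^n$ whose normalized counting measures converge to $\mu$; combined with the upper bound, a squeeze then forces $\#P_n=d_t^n(1+o(1))$ and the equidistribution of $P_n$ in all four cases.

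To build $S_n$ I would fix $\varepsilon>0$, then a small $r>0$ (to be chosen at the end of the argument), and a $\mu$-generic point $x_0$, lying outside the exceptional set, with $\overline{B(x_0,2r)}\cap Y=\emptyset$. For $n$ large, the arguments of \cite[Section~3]{DNT-ind} yield at least $(1-\varepsilon_n)d_t^n$, with $\varepsilon_n\to 0$, ``good'' holomorphic inverse branches $g_i\colon B(x_0,r)\to X$ of $f^n$ such that $g_i(B(x_0,r))\subseteq B(x_0,r/2)$, the forward orbit of each tube is well defined (avoids the indeterminacy sets), and --- writing $B_i^{(j)}\coloneqq f^{n-j}\big(g_i(B(x_0,r))\big)$ for $0\le j\le n$ --- the exponential contraction coming from the positive Lyapunov exponents gives $\operatorname{diam}\big(B_i^{(j)}\big)\le C\lambda^{-(n-j)}$ with uniform constants $C>0$, $\lambda>1$.

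Next I would discard the branches whose tube meets $Y$. A branch is ``bad at level $j$'' only if the corresponding point of $f^{-(n-j)}(x_0)$ lies within distance $C\lambda^{-(n-j)}$ of $Y$; by equidistribution of backward orbits there are at most $\approx d_t^{n-j}\,\mu\big(\{\dist(\cdot,Y)<C\lambda^{-(n-j)}\}\big)$ such points, and each extends to $\approx d_t^{j}$ inverse branches of $f^n$. Summing over $0\le j\le n$ and using moderateness of $\mu$ gives
\[
\#\{\text{bad branches}\}\ \lesssim\ d_t^n\sum_{m\ge 0}\mu\big(\{\dist(\cdot,Y)<C\lambda^{-m}\}\big)\ \lesssim\ d_t^n\sum_{m\ge 0}\lambda^{-m\alpha}\ \le\ \eta(r)\,d_t^n,
\]
where $\eta(r)\to 0$ as $r\to 0$. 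Choosing $r$ so that $\eta(r)<\varepsilon$, and also discarding the $o(d_t^n)$ branches whose fixed point has period a proper divisor of $n$, I would be left with at least $(1-3\varepsilon)d_t^n$ good branches of exact period $n$ with $Y$-avoiding tube. Each such $g_i$ contracts $B(x_0,r)$ into itself, so it has a unique fixed point $a_i$; since $\|Dg_i\|<1$ at $a_i$, this is a repelling periodic point of period exactly $n$ whose orbit lies in $\bigcup_j B_i^{(j)}\subseteq X\setminus Y$. Let $S_n$ be the resulting set.

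Finally, since $a_i$ lies within $\operatorname{diam}\big(g_i(B(x_0,r))\big)\to 0$ of the preimage $g_i(x_0)\in f^{-n}(x_0)$, the measure $\tfrac1{d_t^n}\sum_{a\in S_n}\delta_a$ differs from $\tfrac1{d_t^n}\sum_{p\in f^{-n}(x_0)}\delta_p$ by an error of total mass $o(1)+O(\varepsilon)$; as the latter converges to $\mu$, a diagonal argument letting $\varepsilon\to 0$ produces $S_n\subseteq\RP^0_n(X\setminus Y)$ with $\#S_n=d_t^n(1+o(1))$ and $\tfrac1{d_t^n}\sum_{a\in S_n}\delta_a\to\mu$. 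The upper bound $\#\widetilde\IP_n(f)\le d_t^n(1+o(1))$ and the nesting of the four sets then force $\#P_n=d_t^n(1+o(1))$, and since $P_n\setminus S_n$ contributes at most $o(d_t^n)$ mass, also $\tfrac1{d_t^n}\sum_{a\in P_n}\delta_a\to\mu$. I expect the \textbf{main obstacle} to be precisely the $Y$-avoidance step: controlling the loss of branches needs simultaneously the exponential thinning of the tubes (hence the positive Lyapunov exponents coming from the dominance of $d_t$) and the quantitative regularity of $\mu$ near $Y$ (moderateness), and it requires keeping these two estimates compatible as $r\to 0$ and $n\to\infty$.
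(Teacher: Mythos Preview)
Your overall reduction is correct and coincides with the paper's: since the cases $\IP_n$, $\widetilde\IP_n$, $\RP_n$ are already handled by \cite[Theorem~1.1]{DNT-ind}, it suffices to exhibit $S_n\subseteq\RP^0_n(X\setminus Y)$ with $\#S_n=d_t^n(1+o(1))$ equidistributing to $\mu$, and the squeeze argument then settles all four choices of $P_n$.

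The divergence is in how the inverse-branch tubes are made to avoid $Y$. You build the branches first and afterwards discard the ``bad'' ones, invoking (i) H\"older-type moderateness of $\mu$ near analytic sets and (ii) uniform exponential decay of the diameters of \emph{all} intermediate tubes. There is a genuine gap at (i): the estimate $\mu\big(\{\dist(\cdot,Y)<\rho\}\big)=O(\rho^\alpha)$ is known for holomorphic endomorphisms of $\P^k$ and for polynomial-like maps, but it is \emph{not} established for the equilibrium measure of a general dominant meromorphic self-map of a compact K\"ahler manifold with dominant topological degree. Without it, your series $\sum_{m\ge 0}\mu\big(\{\dist(\cdot,Y)<C\lambda^{-m}\}\big)$ need not even converge (one only knows the individual terms tend to $0$, since $\mu$ charges no analytic set). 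There is also a slip in the bookkeeping: with the diameter bound $C\lambda^{-(n-j)}$ you wrote, the displayed sum is $d_t^n\sum_{m\ge 0}\lambda^{-m\alpha}$, a fixed multiple of $d_t^n$ that does not depend on $r$, so the claim $\eta(r)\to 0$ is unjustified; making it work would require the intermediate diameters to scale with $r$ (say $\le Cr\lambda^{-m}$), which is an additional control you have not argued.

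The paper avoids all of this with a single observation. The construction of \cite[Section~3]{DNT-ind} already produces inverse branches whose intermediate images $B_{-i}$ are disjoint from a prescribed analytic ``obstruction set'' $\Sigma_0$; the only constraints on $\Sigma_0$ are that it contain $I_f$, $I'_f$, $f(I_f)$, $f^{-1}(I'_f)$ and that $\pi_{2|\Gamma}$ be unramified over its complement. One therefore simply \emph{enlarges $\Sigma_0$ to contain $Y$} before running the construction. The current $S_0$ of \cite{DNT-ind}, built to have Lelong number $\ge 1$ along the enlarged $\Sigma$, then dominates the obstruction including $Y$, and the inverse branches delivered by \cite[Proposition~3.1]{DNT-ind} automatically satisfy $B_{-i}\cap Y=\varnothing$ for every $i$. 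No moderateness hypothesis, no a~posteriori discarding of branches, and no dependence on $r$ are needed; the $Y$-avoidance comes for free from the existing machinery.
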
 

\begin{proof}
The result is known for $\IP_n$, $\widetilde \IP_n$ and \(\RP_n\) by \cite[Theorem 1.1]{DNT-ind}.
So it is enough to prove that there is a subset $Q_n\subseteq \RP_n^0(X\setminus Y)$ such that 
$$\lim_{n\to\infty} \frac{1}{d_t^n} \sum_{a\in Q_n} \delta_a = \mu.$$
For this purpose, we can enlarge the subset $Y$. 
Denote by $\Gamma$ the closure of the graph of $f$ in $X\times X$ and $\pi_1,\pi_2$ the canonical projections from $X\times X$ to $X$ respectively. 
The first (resp. second) indeterminacy sets $I_f$ (resp. $I'_f$) of $f$ are defined by the set of points over which the fiber of $\pi_{1|\Gamma}$ (resp. $\pi_{2|\Gamma}$) is of positive dimension. 
Note that the fibre of \(\pi_{1|\Gamma}\) over a point outside the first indeterminacy locus is a singleton. 
Let \(I_0\coloneqq I_f\), \(I_{n+1}\coloneqq I_0\cup f(I_n)\) and \(I_{\infty}\coloneqq\cup_{n\ge 0}I_n\), and let \(I_0'\coloneqq I_f'\), \(I_{n+1}'\coloneqq I_0'\cup f(I_n')\) and \(I_{\infty}'\coloneqq\cup_{n\ge 0}I_n'\).
Note that here the image \(f(A)\coloneqq\pi_2(\pi_1^{-1}(A)\cap\Gamma)\) of a subset \(A\subseteq X\) contains (but not necessarily coincides with) the closure \(\overline{f(A\setminus I_f)}\). 

We shall follow the strategy as in \cite[Section 3]{DNT-ind} to construct, for generic small balls, an almost maximal number of inverse branches with respect to \(f^n\), which are disjoint with \(Y\) and whose sizes we control. 
We briefly explain the strategy for the convenience of the reader.

Choose an analytic subset \(\Sigma_0\) of \(X\) containing the union of \(Y,  I_f, I'_f,f(I_f)\) and \(f^{-1}(I'_f)\) such that the restriction of \(\pi_2\) to \(\Gamma\setminus\pi_2^{-1}(\Sigma_0)\) defines an unramified cover over \(X\setminus\Sigma_0\). 
Let \(B\) be a connected subset of \(X\). 
A continuous bijective map \(g\colon B\to B_{-n}\) with \(B_{-n}\subseteq X\) is said to be
 \textit{an inverse branch of \(B\) with order \(n\)}, if  
 \((B_{-i}\coloneqq f^{n-i}(B_{-n}))\cap\Sigma_0=\varnothing\) for \(0\leq i\leq n\), \(f\colon B_{-i}\to B_{-i+1}\) is a bijective map for \(1\leq i\leq n\), \(B_0=B\), and \(f^n\circ g=\text{id}\) on \(B\).  
Note that \(B\) admits at most \(d_t^n\) inverse branches of order \(n\). 
The condition \(B_{-i}\cap \Sigma_0=\varnothing\) implies that the inverse branch can be extended to any small enough open set containing \(B\), which is also disjoint with \(\Sigma_0\). 
We also call \(B_{-n}\) the image of the inverse branch \(g\colon B\to B_{-n}\).

By the same construction as in \cite[Pages 1813-1814]{DNT-ind}, we fix any constant \(\theta\in (d_{k-1}/d_t,1)\) and then choose a sufficiently large integer \(N\).  
Let \(\Sigma\coloneqq\cup_{0\leq i\leq N}f^i(\Sigma_0)\). 
Choose a suitable resolution \(\pi\colon\widehat{\Gamma}\to\Gamma\) and 
define \(\tau_i\coloneqq\pi_i\circ\pi\) such that \(\tau_1^{-1}(\Sigma)\) and \(\tau_2^{-1}(\Sigma)\) are of pure codimension 1 in \(\widehat{\Gamma}\). 
Note that $\Sigma$ itself is not necessarily a hypersurface; it may have components of different dimensions.  
Fix a sufficiently large K\"ahler form \(\widehat{\omega}\geq \tau_i^*\omega\) on \(\widehat{\Gamma}\) (cf.~\cite{Blanchard-ens}). 
Denote by \(\Sigma'\) and \(\Sigma''\) the union of components of codimension 1 and the union of components of codimension \(\geq 2\) of \(\Sigma\) respectively.
Define
\[
S_0\coloneqq c_1\theta^{-N}d_t^N([\Sigma']+(\tau_2)_*(\widehat{\omega})),~~S\coloneqq\sum_{n=0}^N(f_*)^n(S_0),~~R\coloneqq8\sum_{m\ge 0}(\theta d_t)^{-mN}(f^N)_*^m(S).
\]
As in \cite[p.1814]{DNT-ind}, when \(c_1\) is sufficiently large,  the above positive closed $(1,1)$-current $S_0$ has Lelong number $\geq 1$ at every point of $\Sigma$ (see \cite[Lemma 3.2]{DNT-ind}). 
This current \(S_0\) dominates the ``obstruction'' to construct inverse branches. 
It allows us to construct inverse branches of balls as in \cite[Proposition 3.1]{DNT-ind} with an additional property that $B_{-i}\cap Y=\varnothing$ for every $i$. 
We then follow \cite[Page 1824, End of the proof of Theorem 1.1]{DNT-ind} to get a lower bound of the repelling periodic points of period $n$ whose orbits do not meet $Y$ nor the indeterminacy sets \(I_\infty\cup I_{\infty}'\), noting that the equilibrium measure of \(f\) has no mass on any proper analytic subset. So our theorem is thus proved. 
\end{proof}

\subsection{Upper bound}\label{sub:upper bound}
In this subsection, we follow a similar strategy of the proof of Theorem \ref{t:main2} to control the growth of the intersection of periodic points with any proper analytic subset and thus we can give an upper bound. 
\begin{proposition}\label{p:small-per}
Let \(f\colon X\to X\) be a dominant meromorphic map of a Fujiki variety that has a dominant topological degree, \(I_f\) the indeterminacy locus of \(f\), and $Z$ a proper analytic subset of $X$ which is not contained in \(I_f\). 
Then we have the cardinality $\# \IP_n^0(Z\setminus I_f)=o(\# \IP_n^0(X\setminus I_f))$ as \(n\) tends to infinity.
\end{proposition}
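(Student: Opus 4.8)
I want to show that the isolated periodic points of period $n$ lying on a proper analytic subset $Z\not\subseteq I_f$ form a negligible fraction of all the isolated periodic points of period $n$ whose orbits avoid $I_f$, given that $f$ has dominant topological degree $d_t$. By Theorem \ref{t:main1} (whose lower bound is already established in Subsection \ref{sub:lower bound} via Theorem \ref{t:equi-mero}, and whose proof we are in the course of completing) we know $\#\IP_n^0(X\setminus I_f,f)=d_t^n(1+o(1))$; actually for the present proposition it suffices to use the lower bound $\#\IP_n^0(X\setminus I_f,f)\geq c\,d_t^n$ for some $c>0$ and all large $n$, which comes from Theorem \ref{t:equi-mero}. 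So the real content is an \emph{upper} bound of the form $\#\IP_n^0(Z\setminus I_f,f)=o(d_t^n)$.

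**Key steps.** First I would reduce to the case where $Z$ is irreducible and, by the same ``dimension-minimality + inclusion–exclusion + pigeonhole on remainders mod $m$'' argument used in the proof of Theorem \ref{t:main2}, reduce to the case where $Z$ is $f$-invariant (after replacing $f$ by an iterate and $Z$ by a forward image); note that throughout one uses that $Z$ is not contracted to lower dimension, which is guaranteed by minimality of $\dim Z$. Concretely: suppose for contradiction that $\#\IP_{n_i}^0(Z\setminus I_f,f)\geq \delta d_t^{n_i}$ along a subsequence, for some $\delta>0$; choose $Z$ of minimal dimension with this property; run the argument of Theorem \ref{t:main2} to make $Z$ invariant and to ensure every proper analytic subset $Z_0\subsetneq Z$ carries only $o(d_t^{n_i})$ of these points. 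Second, apply Proposition \ref{p:small-dyn}: since $f$ has a dominant topological degree, \emph{all} dynamical degrees of $f_{|Z}$ are bounded by $d_{k-1}(f)<d_t$, hence $h_a(f_{|Z})\leq \log d_{k-1}(f)<\log d_t$. Third, combine this with the weak upper bound Lemma \ref{l:singular-periodic} applied to $f_{|Z}$ (which gives, on a dense Zariski open $U_Z\subseteq Z$, the estimate $\limsup \frac1n\log\#(\IP_n^0(Z\setminus I_f,f)\cap U_Z)\leq h_a(f_{|Z})$) together with the minimality of $\dim Z$ (which absorbs the periodic points on $Z\setminus U_Z$ into the $o(d_t^{n_i})$ term, since $Z\setminus U_Z$ is a proper analytic subset of $Z$). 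This yields
\[
\limsup_{i\to\infty}\frac{1}{n_i}\log\#\IP_{n_i}^0(Z\setminus I_f,f)\leq h_a(f_{|Z})\leq \log d_{k-1}(f)<\log d_t,
\]
contradicting $\#\IP_{n_i}^0(Z\setminus I_f,f)\geq\delta d_t^{n_i}$. Since $Z$ was arbitrary proper, this proves $\#\IP_n^0(Z\setminus I_f,f)=o(d_t^n)=o(\#\IP_n^0(X\setminus I_f,f))$.

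**Main obstacle.** The delicate point is the bookkeeping in the reduction to an invariant $Z$: the same three subtleties that appear in the proof of Theorem \ref{t:main2} recur here — (i) one must work with $f^s(Z):=\overline{f^s(Z\setminus I_{f^s})}$ and check that $f$ acts bijectively on the finite set $\IP_n^0(Z\setminus I_f,f)$, so that $\#(\IP_{n_i}^0\cap f^s(Z))\geq \delta d_t^{n_i}$ too; (ii) one needs the pigeonhole on the residues $n_i \bmod m$ to force $m\mid n_i$ eventually, using that $\IP_n\cap Z\subseteq \IP_n\cap f^n(Z)$ for points fixed by $f^n$; and (iii) one must be careful that replacing $f$ by $f^m$ does not change the quantity being estimated up to the relevant precision — here one only needs it along the subsequence $\{n_i\}$, and $\IP_{n}^0(X\setminus I_f,f^m)$ relates to $\IP_{mn}^0(X\setminus I_{f^{mn}},f)$ in the usual way, with $d_t(f^m)=d_t(f)^m$. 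A secondary point is that $\IP_n^0(Z\setminus I_f,f)$ refers to orbits avoiding $I_f$ (not $I_{f|Z}$), but since $Z\not\subseteq I_f$ these points lie in $Z\setminus (Z\cap I_f)$ and $Z\cap I_f$ is a proper analytic subset of $Z$, so this discrepancy is again swallowed by minimality of $\dim Z$. None of this requires new machinery beyond Proposition \ref{p:small-dyn} and Lemma \ref{l:singular-periodic}; it is essentially a repackaging of the Theorem \ref{t:main2} argument with the exponential rate $d_t$ in place of $e^{h_a(f)}$ and with the strict gap $d_{k-1}(f)<d_t$ doing the work.
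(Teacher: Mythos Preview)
Your proposal is correct and follows essentially the same route as the paper: argue by contradiction, reduce to an irreducible invariant $Z$ of minimal dimension via the same inclusion--exclusion/pigeonhole mechanism from the proof of Theorem~\ref{t:main2}, and then derive a contradiction between the lower bound $\delta d_t^{n_i}$ (from Theorem~\ref{t:equi-mero}) and the upper bound governed by $h_a(f_{|Z})\le\log d_{k-1}(f)<\log d_t$ (from Proposition~\ref{p:small-dyn}). The only cosmetic difference is that the paper invokes the already-proved Theorem~\ref{t:main2} directly for $f_{|Z}$ to obtain $\limsup_i\frac{1}{n_i}\log\#(P_{n_i}\cap Z)\le h_a(f_{|Z})$ in one stroke, whereas you re-derive this inequality by combining Lemma~\ref{l:singular-periodic} with the minimality of $\dim Z$; the two are equivalent, and citing Theorem~\ref{t:main2} would shorten your write-up.
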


\begin{proof}
For the convenience, we let \(P_n\coloneqq \IP_n^0(X\setminus I_f,f)\). 
Suppose to the contrary. 
Then  there is a number $\delta>0$ and an increasing sequence $\{n_i\}$ such that $\# (P_{n_i}\cap Z)\geq \delta (\# P_{n_i})$ for every $i$. 
Besides, replacing \(Z\) with an irreducible analytic subset and replacing \(\delta\) with another positive number (possibly smaller than \(\delta\)), we may assume that $Z$ has the minimal dimension satisfying the above inequality.
In other words, there is a proper analytic subset \(Z\) of minimal dimension such that for each \(i\)
\[
\#(P_{n_i}\cap Z)\geq \delta \# P_{n_i}.
\]
With the same argument as in the proof of Theorem \ref{t:main2}, we may assume that \(Z\) is \(f\)-invariant.
Therefore, it follows from Theorem \ref{t:equi-mero} that
\begin{align*}
\delta d_t^{n_i}(1+o(1))\leq\delta(\# P_{n_i})\leq\# (P_{n_i}\cap Z).
\end{align*}
Hence, we obtain that
\[
\text{log}\,d_t\leq \limsup_{i\to\infty}\frac{1}{n_i}\text{log}\,\#(P_{n_i}\cap Z)\leq h_a(f_{|Z}),
\]
where the last inequality is due to Theorem \ref{t:main2}, noting that by our assumption on \(\dim(Z)\), the subset \(Z\) cannot be contracted to a lower dimensional analytic subset via \(f\).
But then this contradicts  Proposition \ref{p:small-dyn} as the maximal dynamical degree of \(f_{|Z}\) is no more than \(d_{k-1}\).
Our proposition is thus proved.
\end{proof}

\subsection{Proofs of the statements}\label{sub:proof of mainthm}
We shall conclude our paper by proving Theorem \ref{t:main1} and Corollary \ref{c:swz}.
\begin{proof}[Proof of Theorem \ref{t:main1}]
Let \(\pi\colon\widetilde{X}\to X\) be a blowup from a compact K\"ahler manifold \(\widetilde{X}\) to the Fujiki variety \(X\), and \(\widetilde{f}\) the dominant meromorphic map on \(\widetilde{X}\) induced by \(f\).
Let \(U\) be the open subset of \(X\), over which \(\pi\) is an isomorphism.
Let \(Y'\coloneqq Y\cup (X\setminus U)\).

First, we prove the lower bound.
Let us take \(\widetilde{Y}=\pi^{-1}(Y')\), which is a proper analytic subset of \(\widetilde{X}\).
Applying Theorem \ref{t:equi-mero} to the pair \((\widetilde{X},\widetilde{Y})\), we have
\[
\# \IP_n^0(X\setminus Y,f)\geq \# \IP_n^0(X\setminus Y',f)\ge\# \RP_n^0(\widetilde{X}\setminus \widetilde{Y},\widetilde{f})=d_t^n(1+o(1))
\]
as \(n\) tends to infinity.

Second, we prove the upper bound.
Since \(\pi\) is an isomorphism over the open subset \(U\),
applying Theorem \ref{t:equi-mero} to \(\widetilde{X}\), we have
\[
\# \IP_n^0(X\setminus Y',f)\leq   \# \IP_n(\widetilde{X}\setminus \widetilde{Y},\widetilde{f})\leq\#\widetilde \IP_n(\widetilde{f})=d_t^n(1+o(1))
\]
as \(n\) tends to infinity.
On the other hand, applying Proposition \ref{p:small-per} for \(Y'\),
we get $\#\IP_n^0(Y'\setminus I_f,f)=o(\# \IP_n^0(X\setminus I_f,f))$. 
So our theorem is thus proved.
\end{proof}

\begin{proof}[Proof of Corollary \ref{c:swz}] 
Let \(k\coloneqq \dim(X)\).
First, we show that the \(f\)-periodic points of period \(n\) are isolated. 
Suppose to the contrary.
Then with \(f\) replaced by a power, we can find a positive dimensional \(f\)-invariant analytic subvariety \(Y\) such that \(f|_{Y}=\text{id}\).
On the other hand, the restriction of the K\"ahler class \([\omega]\) to \(Y\) is also a K\"ahler class which we denote by \([\omega_Y]\) (see~\cite[Proposition 3.6]{GK-AIF} and \cite{Vaj96}). 
Then we obtain that \((f|_Y)^*[\omega_Y]=q[\omega_Y]\), which leads to a contradiction to \(f|_{Y}=\text{id}\).

In view of Theorem \ref{t:main1}, we only need to prove that \(f\) has a dominant topological degree. 
Recall that the K\"ahler class \([\omega]\) is represented by a smooth K\"ahler form \(\omega\) and 
we can do the wedge product of \(\omega\) along the smooth locus of \(X\). 
Moreover, as \(f^*[\omega]=[f^*\omega]=[q\omega]\), there is a globally smooth function \(u\) on \(X\) such that \(f^*\omega=q\omega+i\partial\overline{\partial}u\) (see Definition \ref{d:differential}; cf.~\cite[Remark 3.2]{HP16}). 
Let \(\pi\colon\widetilde{X}\to X\) be a resolution of singularities. 
Then the pullback \(\alpha\coloneqq\pi^*\omega\) is also a smooth \((1,1)\)-form on \(\widetilde{X}\) (see \cite[Lemma 1.3]{Dem85} and Definition \ref{d:differential}) and the cohomology class \([\alpha]=[\pi^*\omega]=\pi^*[\omega]\) is a nef and big class (see \cite[Proposition 2.6 (2) and (4)]{Zhong-Asian} and \cite[Proposition 3.6]{GK-AIF}), noting that the resolution factors through the normalization of \(X\) which is a finite bimeromorphic map. 
This implies that 
\[
\int_{X_\reg}\omega^k=\int_{\widetilde{X}}\pi^*\omega^k>0.
\]
Note that the first integration is over the smooth locus of \(X\) and the second integration is the volume of a nef and big class which is positive (see \cite[Theorem 0.5]{DP-annals}).
Also, via the pullback, we have \(\pi^*f^*\omega=q\alpha+i\partial\overline{\partial}(u\circ \pi)\); see \cite[\S 1.2]{Varouchas-mathann}.  
Hence, it follows from the Stokes’ theorem that 
\begin{align}\label{eq-integral}
q^k\int_{X_\reg}\omega^k=\int_{\widetilde{X}}(q\alpha)^k=\int_{\widetilde{X}}(\pi^*f^*\omega)^k=\int_{X_\reg}(f^*\omega)^k=\deg(f)\cdot\int_{X_\reg}\omega^k
\end{align}
where the last equality is due to the projection formula (see e.g. \cite[Proposition 2.2]{Zhong-Asian}). 
In particular, we obtain that \(d_t=\deg(f)=q^k\). 

Next, we show that \(d_1(f)=q\).
Let \(\widetilde f\) be the induced dominant meromorphic map on \(\widetilde X\). 
Consider the following commutative diagram 
\[\xymatrix{
&\widetilde \Gamma_n\ar[dr]^{p_2^{(n)}}\ar[dl]_{p_1^{(n)}}&\\
\widetilde{X}\ar[rr]^{\widetilde{f}^n}\ar[d]_{\pi}&&\widetilde{X}\ar[d]^{\pi}\\
X\ar[rr]_{f^n}&&X}
\]
where \(f^n\) is holomorphic, \(\widetilde f^n\) is a dominant meromorphic map such that \(\pi\circ\widetilde f^n=f^n\circ\pi\), and \(\widetilde \Gamma_n\) is a resolution of singularities of the graph of \(\widetilde f^n\).
Then we have
\[(\widetilde{f}^n)^*(\alpha)=(\widetilde{f}^n)^*\pi^*(\omega)=(p_1^{(n)})_*(p_2^{(n)})^*\pi^*(\omega)=\pi^*(f^n)^*(\omega)=q^n(\alpha+i\partial\overline{\partial}u_n),\]
where the last second equality is due to the projection formula (cf.~\cite[Proposition 2.2]{Zhong-Asian}), and 
\[u_n=\sum_{i=0}^{n-1}q^{n-1-i}u\circ f^i\circ\pi\] 
is a globally smooth function. 
Therefore, 
\[
d_1(f)\coloneqq d_1(\widetilde{f})=\lim_{n\to\infty}\left(\int_{\widetilde{X}}(\widetilde{f}^n)^*\alpha\wedge \alpha^{k-1}\right)^{\frac{1}{n}}=\lim_{n\to\infty}\left(\int_{\widetilde{X}}q^n\alpha\wedge \alpha^{k-1}\right)^{\frac{1}{n}}=q,
\]
where the last second equality is by the Stokes theorem.
On the other hand, by the log-concavity of dynamical degrees (see Definition \ref{d:dynamical-degree}), we have \(d_p(\widetilde{f})\leq d_1(\widetilde{f})^p=q^p\) for each \(p\leq k\). 
As the topological degree satisfies  \(d_t(\widetilde{f})=q^k\), we deduce that \(d_p(\widetilde{f})=q^p\) for every \(p\). 
This implies that \(\widetilde{f}\) and hence \(f\) have a dominant topological degree.
We complete the proof of the corollary.
\end{proof}


\end{document}